\documentclass[10pt]{article}

\usepackage{amsmath, amssymb, amsthm}
\usepackage{fullpage}
\usepackage{float}
\usepackage[all]{xy}
\usepackage{graphicx, epstopdf}

\newtheorem{definition}{Definition}[section]
\newtheorem{theorem}[definition]{Theorem}

\newtheorem{lemma}[definition]{Lemma}
\newtheorem{corollary}[definition]{Corollary}
\newtheorem{proposition}[definition]{Proposition}

\DeclareMathOperator{\GL}{GL}
\DeclareMathOperator{\Sp}{Sp}
\DeclareMathOperator{\rk}{rk}
\DeclareMathOperator{\GSp}{GSp}
\DeclareMathOperator{\ord}{ord}
\DeclareMathOperator{\tr}{tr}
\DeclareMathOperator{\gen}{gen}

\newlength\tindent
\setlength{\tindent}{\parindent}
\setlength{\parindent}{0pt}

\usepackage{titlesec}
\titleformat*{\section}{\normalsize\centering\scshape}  

\title{\centering\large\scshape Fourier coefficients of degree two Siegel-Eisenstein series with trivial character at squarefree level}
\author{\centering\normalsize\scshape Martin J. Dickson}
\date{}

\begin{document}
\maketitle

\begin{quote} \textsc{Abstract}.  We compute the Fourier coefficients of a basis of the space of degree two Siegel-Eisenstein series of square-free level $N$ transforming with the trivial character.  We then apply use these formul\ae\ to present some explicit examples of higher representation numbers attached to non-unimodular quadratic forms.\end{quote}

\begin{quote} \textsc{Keywords}.  Siegel modular forms; Eisenstein series; Fourier coefficients; quadratic forms; theta series.\end{quote}

\begin{quote} \textsc{Mathematics Subject Classification}.  11F30 \end{quote}

\section{Introduction}

\noindent Eisenstein series have played an important r\^{o}le in the theory of Siegel modular forms, dating back to Siegel's Hauptsatz which expresses the genus-average Siegel theta series as a linear combination of Eisenstein series.  Despite their distinguished history, there are still some perhaps surprising gaps in our knowledge of the Eisenstein series.  An important example of this is that the Fourier coefficients of the Siegel-Eisenstein series remain unknown in many cases.\\

Various authors have worked on the case Siegel-Eisenstein series for the full symplectic group $\Sp_{2n}(\mathbb{Z})$  and we mention only a sample of the results here.  Maass (\cite{Maass1964}, \cite{Maass1972}) obtained a formula for the Fourier coefficients of degree $2$ Siegel-Eisenstein series by explicitly computing the local densities in Siegel's formula.  His results were also obtained by Eichler--Zagier (\cite{EichlerZagier1985}) by realising the Siegel-Eisenstein series as the Maass lift of the corresponding Jacobi-Eisenstein series.  More recently, formul\ae\ for general degree $n$ have been obtained via different methods by Katsurada (first degree $3$ \cite{Katsurada1997}, then any degree \cite{Katsurada1999}) and Kohnen (even degree \cite{Kohnen2002}), Choie-Kohnen (odd degree \cite{ChoieKohnen2011}).  Katsurada's approach has more in common with Maass' approach, whereas Kohnen's approach (for even degree) is at least nominally closer to that of Eichler--Zagier with his linearised version of the Ikeda lift playing a r\^{o}le simlar to the Maass lift.\\

In the case of modular forms transforming with character $\chi$ under the Hecke-type congruence subgroup $\Gamma_0^{(n)}(N)$ the situation is less well-known.  To the author's knowledge, the only explicit results in the literature pertain to a single Eisenstein series when the degree is $n=2$ and the character $\chi$ is primitive modulo $N$.  First Mizuno (\cite{Mizuno2009}) considered the case of squarefree $N$ and obtained the Fourier coefficients by realising a level $N$ Eisenstein series as a Maass lift of a corresponding level $N$ Jacobi-Eisenstein series.  The argument is more difficult than Eichler-Zagier's at level $1$ since the author requires analytic machinery to prove the coincidence of the lift with the desired form.  An extension of this result, dropping the assumption that $N$ be squarefree, was obtained by Takemori (\cite{Takemori2012}) by computing the local densities.  A related result, again in the context of $n=2$ and arbitrary level $N$ but now with no restrictions on the character, is due to Yang, who explicitly computes the local densities in Siegel's theorem.  Yang's methods are rather different: following Weil and Kudla he interprets those local densities in terms of the local Whittaker function coming from the representation attached to the Eisenstein series.  He explicitly computes this latter quantity (\cite{Yang1998} for $p \neq 2$; \cite{Yang2004} for $p=2$), which is essentially equivalent to computing the Fourier coefficients of the genus-average theta series, hence the Fourier coefficients of a Siegel-Eisenstein series.\\

Since quadratic forms are rarely unimodular the corresponding theta series will usually be modular forms of level $N > 1$.  Additionally, it is important for the arithmetic theory of these quadratic forms that we have explicit formul\ae\ for Fourier coefficients for a \textit{basis} of the space of Siegel-Eisenstein series.  Although we do have some Fourier coefficients when $N>1$, we only have these for a single Eisenstein series, but (for large enough weight) the dimension of the space of Siegel-Eisenstein series of level $N$ is strictly bigger than $1$.\\

In this paper we consider only the case $n=2$.  After setting up notation in \S\ref{notation}, in \S\ref{sctn:calculation-of-fcs} we consider the case when the level $N$ is squarefree and the character $\chi$ is the trivial character modulo $N$.  Under these assumptions we can use the following very simple method to obtain relations amongst the Fourier coefficients: let $p$ be a prime not dividing $N$, then an appropriate sum of level $Np$ Eisenstein series produces a level $N$ Eisenstein series, and after acting on this relation with Hecke operators at $p$ the explicit formul\ae\ from \cite{Walling2012} produce enough linear relations among the Fourier coefficients to write down a formula for the coefficients of a level $Np$ Eisenstein series in terms of those of the level $N$ one.  Using a convenient level $1$ formula, namely that of \cite{EichlerZagier1985}, one can then argue by induction to obtain Fourier coefficients for a full basis of the Eisenstein subspace in the case of squarefree level and trivial character.  This is carried out in Lemma \ref{basic-formula-for-ai}: the main bulk of the computation is then placing these in a more elucidating form as stated in Theorem \ref{fc-of-eisenstein-series}.\\

Let us remark that the important feature that makes this work is that the level $Np$ Fourier coefficients add up to something known in the base case of the induction.  So for example in the case of primitive character this approach seems unlikely to succeed.  In the case of squarefree level the transformation character will always be a product of primitive and trivial characters, and if one knows the Fourier coefficients for Eisenstein series transforming with a given primitive character $\chi$ then one can argue as suggested above to obtain Fourier coefficients of Eisenstein series of any squarefree level and character which has $\chi$ as the underlying primitive character.  However one would need to know the Fourier coefficients for a full basis at the primitive stage in order to deduce the Fourier coefficients for a full basis at later stages.  As noted above no such formul\ae\ for a full basis are currently available.\\

Finally in \S\ref{sctn:representation-numbers} we emphasise this point regarding the importance of having Fourier coefficients for a full basis of the Eisenstein subspace by showing how one can compute the genus representation numbers of an integral quadratic form by combining knowledge of the Fourier coefficients of a basis for the Eisenstein subspace with the well-known formul\ae\ for the value a theta series takes at a $0$-dimensional cusp of (the Satake compactification of) $\Gamma_0^{(2)}(N) \backslash \mathbb{H}_2$.  There is a finite number of integral quadratic forms which have a single-class genus, and from the viewpoint of degree $2$ representation numbers only the $8$-dimensional ones have dimension large enough to study via Siegel-Eisenstein series (i.e. the Eisenstein series of degree $2$ and weight $4$ converges) of even weight (since odd weight Eisenstein series are problematic to define with trivial character).  Amongst these 8-dimensional integral quadratic forms, or equivalently even integral lattices, only 5 satisfy the condition that their level be squarefree and their character trivial.  Of course one of these is the unimodular lattice $E_8$ for which degree $2$ representation numbers (i.e. explicit formul\ae\ for the number of times it represents a quadratic form in $2$ variables) follow (for example) from the formula of \cite{EichlerZagier1985}.  The remaining $4$ have small prime level and for these we will note how the methods of this paper give new closed formul\ae\ for their degree $2$ representation numbers.\\

\textbf{Acknowledgements.}  The author would like to thank his supervisor Dr. L. Walling for suggesting this problem and her help with related questions about Siegel modular forms.  He would also like the thank their referee for their careful reading of their manuscript and suggestions.  The author's research is supported by an EPSRC Doctoral Training Grant.

\section{Siegel-Eisenstein series and Hecke operators}\label{notation}

\textbf{Preliminaries.}  For any ring $R$, we let $R^{n \times n}$ denote the set of $n \times n$ matrices over $R$, and $R^{n \times n}_{\text{sym}}$ the additive subgroup of symmetric matrices.  Define the algebraic group
\[\GSp_{2n} = \left\{g \in \GL_{2n}; {}^tg \left(\begin{smallmatrix} & -1_2 \\ 1_2 &  \end{smallmatrix}\right) g = \lambda(g)\left(\begin{smallmatrix} & -1_2 \\ 1_2 &  \end{smallmatrix}\right)\text{ for some }\lambda(g) \in \GL_1\right\}. \]
If $R$ is a subring of $\mathbb{R}$ we write $\GSp_{2n}^+(R)$ for the subgroup of $\GSp_{2n}(R)$ comprised of those $g$ for which $\lambda(g) > 0$.  $\lambda : \GSp_{2n} \to \GL_1$ defines a homomorphism, the kernel is by definition $\Sp_{2n}$.  We define the congruence subgroup
\[\Gamma^{(n)}_0(N) = \left\{\left(\begin{matrix} A & B \\ C & D \end{matrix}\right) \in \Sp_{2n}(\mathbb{Z});\: C \equiv 0 \bmod N\right\}.\]
We also write $\Gamma^{(n)}$ for $\Gamma^{(n)}_0(1) = \Sp_{2n}(\mathbb{Z})$.  Let $\mathbb{H}_n = \{Z \in \mathbb{C}^{n \times n}_{\text{sym}}; \Im(z)>0\}$ be the Siegel upper half space of degree $n$.  Let $k$ be a positive integer; we define the weight $k$ slash operator on functions $f : \mathbb{H}_n \to \mathbb{C}$ for $g = \left(\begin{smallmatrix} A & B \\ C & D \end{smallmatrix}\right) \in \GSp^+_{2n}(\mathbb{R})$ by
\begin{equation}\label{slash-operator-normalisation} (f|_k g)(Z) = \lambda(g)^{nk/2} \det(CZ + D)^{-k} f((AZ+B)(CZ+D)^{-1}).\end{equation}
We say a holomorphic function $f : \mathbb{H}_n \to \mathbb{C}$ is a Siegel modular form of degree $n$, weight $k$, level $N$, and character $\chi$ (modulo $N$) if 
\[f |_k \gamma = \chi(\det(D))f\] 
for all $\gamma = \left(\begin{smallmatrix} A & B \\ C & D \end{smallmatrix}\right) \in \Gamma^{(n)}_0(N)$.  The complex vector space of such functions is denoted $\mathcal{M}^{(n)}_k(N, \chi)$.  We are mainly interested in the case when $\chi$ is the trivial character $\mathbf{1}_N$ modulo $N$, for which we abbreviate $\mathcal{M}_k^{(n)}(N) := \mathcal{M}_k^{(n)}(N, \mathbf{1}_N)$.  In this paper we will mostly work with the case of Siegel degree two in which case we drop the superscript ${}^{(2)}$, so for example $\mathcal{M}_k(N) := \mathcal{M}_k^{(2)}(N)$, $\Gamma_0(N) := \Gamma_0^{(2)}(N)$, and $\Gamma := \Gamma^{(2)} = \Sp_4(\mathbb{Z})$. \\

\textbf{Siegel-Eisenstein series.} Fix a positive integer $N$ and assume $k \geq 4$ is even.  We define an Eisenstein series for each $0$-cusp of the Satake compactification $\mathcal{S}(\Gamma_0(N) \backslash \mathbb{H}_2)$ of $\Gamma_0(N) \backslash \mathbb{H}_2$; that is, for each element of the double coset $\Gamma_\infty \backslash \Gamma/ \Gamma_0(N)$, where
\[\Gamma_\infty = \left\{\left(\begin{matrix} A & B \\ 0 & D \end{matrix}\right) \in \Gamma\right\}.\]
To do so, pick such an element $\Gamma_\infty \gamma_0 \Gamma_0(N)$, and define
\[ \mathbb{E}_{\gamma_0}(\tau) = \sum_\gamma \det(C_{\gamma}\tau + D_{\gamma})^{-k}\]
where $\Gamma_\infty \gamma_0 \Gamma_0(N) = \bigsqcup_\gamma \Gamma_\infty \gamma$, and $\gamma = \left(\begin{smallmatrix} A_\gamma & B_\gamma \\ C_\gamma & D_\gamma \end{smallmatrix}\right)$.  One may easily check that, under the restriction that $k$ be even, this series depends only on the double coset $\Gamma_\infty \gamma_0 \Gamma_0(N)$ and is independent of the choice of the representative $\gamma$.  Under the assumption $k \geq 4$ (using the Hecke trick for small weights) the series converges and thus defines a nonzero element of $\mathcal{M}_k(N)$.  Letting $\gamma_0$ vary over a system of representative for $\Gamma_\infty \backslash \Gamma / \Gamma_0(N)$ we obtain a basis of the Siegel-Eisenstein subspace of $\mathcal{M}_k(N)$.  This basis is characterised by the property that $\mathbb{E}_{\gamma_0}$ is the unique Siegel-Eisenstein series that takes value $1$ at the cusp corresponding to $\gamma_0$ and $0$ at all other cusps, as can easily be checked from the definition.  We shall refer to this basis as the \textit{natural basis} for the subspace of $\mathcal{M}_k(N)$ spanned by the Siegel-Eisenstein series.  As we shall see this is not an eigenbasis for Hecke operators at primes dividing the level, but is useful for other computational purposes.\\

In order to describe the action of Hecke operators on Siegel-Eisenstein series it is convenient to fix a choice of the representative $\gamma_0$.  We do so in the same fashion as \cite{Walling2012}, recalling the discussion via Lemmas \ref{eisenstein-quotient-in-csp} and \ref{csp-orbit-rank-characterisation}.  They are both readily verified; the details of the computations are in \cite{Walling2012} \S2.

\begin{lemma}\label{eisenstein-quotient-in-csp}  Let $\mathcal{S}$ denote the set of coprime symmetric pairs of $2 \times 2$ matrices, and let $G \in \GL_2(\mathbb{Z})$ act on $\mathcal{S}$ by $(C, D) \mapsto (GC, GD)$.  \begin{enumerate}  \item The map
\[\left(\begin{matrix} A & B \\ C & D \end{matrix}\right) \mapsto \left(C, D\right)\]
induces a bijection between $\Gamma_\infty \backslash \Gamma$ and $\GL_2(\mathbb{Z}) \backslash \mathcal{S}$.  
\item  Let $\left(\begin{smallmatrix} A_\gamma & B_\gamma \\ C_\gamma & D_\gamma \end{smallmatrix}\right) \in \Gamma_0(N)$ act (from the right) on $\GL_2(\mathbb{Z}) \backslash \mathcal{S}$ by 
\[\GL_2(\mathbb{Z})(C, D) \mapsto \GL_2(\mathbb{Z})(CA_\gamma + DC_\gamma, CB_\gamma + DD_\gamma).\] 
Then the bijection of (1.) is an isomorphism of $\Gamma_0(N)$-sets. \end{enumerate}\end{lemma}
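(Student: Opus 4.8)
The plan is to derive everything from the defining relations of $\Sp_4$. Expanding ${}^t\gamma J\gamma=J$ and $\gamma J\,{}^t\gamma=J$ for $J=\left(\begin{smallmatrix}&-1_2\\1_2&\end{smallmatrix}\right)$ and $\gamma=\left(\begin{smallmatrix}A&B\\C&D\end{smallmatrix}\right)\in\Gamma$ yields the standard identities; I will use $C\,{}^tD=D\,{}^tC$ and $A\,{}^tD-B\,{}^tC=1_2$, the inverse formula $\gamma^{-1}=\left(\begin{smallmatrix}{}^tD&-{}^tB\\-{}^tC&{}^tA\end{smallmatrix}\right)$, and the fact that $\Gamma_\infty$ is exactly the set of $\left(\begin{smallmatrix}U&S\\0&{}^tU^{-1}\end{smallmatrix}\right)$ with $U\in\GL_2(\mathbb{Z})$ and $U\,{}^tS$ symmetric. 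With these, part (1) splits into (a) the map is well defined into $\GL_2(\mathbb{Z})\backslash\mathcal{S}$, (b) it is injective, (c) it is surjective; part (2) is then a single matrix multiplication.

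For (a), multiplying $\gamma$ on the left by $\left(\begin{smallmatrix}U&S\\0&{}^tU^{-1}\end{smallmatrix}\right)\in\Gamma_\infty$ replaces the lower pair $(C,D)$ by $({}^tU^{-1}C,{}^tU^{-1}D)$, i.e.\ by its left translate under ${}^tU^{-1}\in\GL_2(\mathbb{Z})$; hence the assignment descends to $\Gamma_\infty\backslash\Gamma\to\GL_2(\mathbb{Z})\backslash\mathcal{S}$. The image pair is symmetric by $C\,{}^tD=D\,{}^tC$, and it is coprime because $A\,{}^tD-B\,{}^tC=1_2$ forces any common left divisor $G$ (say $C=GC_1,D=GD_1$) to satisfy $(A\,{}^tD_1-B\,{}^tC_1)\,{}^tG=1_2$, so $G\in\GL_2(\mathbb{Z})$. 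For (b), if two matrices $\gamma_1,\gamma_2\in\Gamma$ have lower pairs in the same $\GL_2(\mathbb{Z})$-orbit, I first translate $\gamma_1$ by a suitable element of $\Gamma_\infty$ so that it shares its lower pair with $\gamma_2$, and then compute directly that $\gamma_2\gamma_1^{-1}$ has lower-left block $C\,{}^tD-D\,{}^tC=0$ and lower-right block $D\,{}^tA-C\,{}^tB=1_2$, hence lies in $\Gamma_\infty$; thus $\Gamma_\infty\gamma_1=\Gamma_\infty\gamma_2$.

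The main obstacle is surjectivity (c): that \emph{every} coprime symmetric pair arises as the lower pair of some $\gamma\in\Gamma$, which is the classical completion lemma for symmetric pairs. I would prove it by a normal-form reduction. The left $\GL_2(\mathbb{Z})$-action preserves both the symmetric relation (since $(GC)\,{}^t(GD)=G(C\,{}^tD)\,{}^tG$) and coprimality, and completing a translate $(GC,GD)$ yields a completion of $(C,D)$ after left multiplication by the $\Gamma_\infty$-element $\left(\begin{smallmatrix}{}^tG&0\\0&G^{-1}\end{smallmatrix}\right)$; so I may reduce $(C,D)$ to a convenient Hermite/Smith normal form and then exhibit blocks $A,B$ solving $A\,{}^tD-B\,{}^tC=1_2$ and $A\,{}^tB=B\,{}^tA$ explicitly. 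Coprimality is precisely what guarantees that the relevant Bezout step is solvable over $\mathbb{Z}$, and the symmetry $C\,{}^tD=D\,{}^tC$ is what lets the completing blocks be chosen to meet the remaining symplectic constraints. This is the step I expect to require the most care, and it is where I would lean on the computations of \cite{Walling2012} \S2.

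Finally, for part (2) I would compute the lower block-row of a product: for $\gamma'=\left(\begin{smallmatrix}A_{\gamma'}&B_{\gamma'}\\C_{\gamma'}&D_{\gamma'}\end{smallmatrix}\right)\in\Gamma_0(N)$ the lower pair of $\gamma\gamma'$ is $(CA_{\gamma'}+DC_{\gamma'},\,CB_{\gamma'}+DD_{\gamma'})$, exactly the right action in the statement. Since right multiplication by $\Gamma_0(N)$ is well defined on $\Gamma_\infty\backslash\Gamma$ and this formula visibly commutes with left translation by $\GL_2(\mathbb{Z})$ (replacing $(C,D)$ by $(GC,GD)$ scales the output by $G$ on the left), the bijection of (1) is $\Gamma_0(N)$-equivariant and hence an isomorphism of $\Gamma_0(N)$-sets.
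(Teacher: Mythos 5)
Your direct verification is correct and is essentially the argument the paper has in mind: the paper itself offers no proof of this lemma, stating only that it is ``readily verified'' and deferring the computations to \cite{Walling2012} \S 2, and your checks of well-definedness (the $\Gamma_\infty$-action on the bottom block row realising exactly the left $\GL_2(\mathbb{Z})$-translations), of injectivity (showing $\gamma_2\gamma_1^{-1}$ has lower block row $(0,1_2)$ via $C\,{}^tD=D\,{}^tC$ and $D\,{}^tA_1-C\,{}^tB_1=1_2$), and of the equivariance in part (2) are all accurate. The one step you leave as a sketch --- surjectivity, i.e.\ the classical completion of a coprime symmetric pair to an element of $\Gamma$ --- is precisely the step with genuine content, and your proposed reduction (normalise $(C,D)$ under the left $\GL_2(\mathbb{Z})$-action, then solve for $A,B$ using coprimality for the Bezout step and symmetry for the remaining symplectic constraints) is the standard and correct way to carry it out.
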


Let $\rho = (N_0, N_1, N_2)$ be a multiplicative partition of $N$, i.e. a triple of positive integers such that $N_0 N_1 N_2 = N$ (since $N$ is squarefree the $N_i$ are coprime).  Pick a symmetric matrix $M_{\rho}$ such that
\[M_{\rho} \equiv \begin{cases} \left(\begin{smallmatrix} 0 & 0 \\ 0 & 0 \end{smallmatrix}\right) \bmod N_0, \\ \left(\begin{smallmatrix} 1 & 0 \\ 0 & 0 \end{smallmatrix}\right) \bmod N_1, \\ \left(\begin{smallmatrix} 1 & 0 \\ 0 & 1 \end{smallmatrix}\right) \bmod N_2.\end{cases}\]

For $C$ a square integer matrix, let $\rk_q(C)$ denote the rank of the reduced matrix over $\mathbb{Z}/q\mathbb{Z}$.  We can use this data to characterise the $\Gamma_0(N)$ orbits of Lemma \ref{eisenstein-quotient-in-csp}:

\begin{lemma}\label{csp-orbit-rank-characterisation}  Let $(C, D)$ be a coprime symmetric pair.  Then $\GL_2(\mathbb{Z})(C, D)$ is in the $\Gamma_0(N)$-orbit of $\GL_2(\mathbb{Z})(M_\rho, I)$ if and only if $\rk_q(C) = \rk_q(M_\rho)$ for all $q \mid N$.  \end{lemma}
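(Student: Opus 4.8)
The plan is to deduce both implications from the single observation that $\rk_q(C)$ is constant on the orbits in question. For the ``only if'' direction I would first check this invariance directly. Under the left action $C \mapsto GC$ with $G \in \GL_2(\mathbb{Z})$, the determinant $\det G = \pm 1$ is a unit modulo $q$, so $\rk_q(GC) = \rk_q(C)$. Under the right action one has $C \mapsto C A_\gamma + D C_\gamma$ with $\gamma = \left(\begin{smallmatrix} A_\gamma & B_\gamma \\ C_\gamma & D_\gamma\end{smallmatrix}\right) \in \Gamma_0(N)$; since $q \mid N$ we have $C_\gamma \equiv 0 \bmod q$, so modulo $q$ this reads $C \mapsto C A_\gamma$, and the symplectic relation ${}^t A_\gamma D_\gamma - {}^t C_\gamma B_\gamma = I$ reduces to ${}^t A_\gamma D_\gamma \equiv I \bmod q$, forcing $A_\gamma$ to be invertible over $\mathbb{F}_q$. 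Hence $\rk_q(C A_\gamma) = \rk_q(C)$, and $\rk_q(C)$ is a well-defined invariant of the double coset. As $\rk_q(M_\rho)$ equals $0$, $1$, or $2$ according as $q \mid N_0$, $q \mid N_1$, or $q \mid N_2$, this already yields the forward implication (and confirms that the rank data is precisely a multiplicative partition of $N$).

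For the converse I would show that a coprime symmetric pair can be transported to $(M_\rho, I)$, with $\rho$ read off from the ranks, by reducing the problem modulo $N$ and then working one prime at a time. Because $N$ is squarefree, the Chinese Remainder Theorem factors the reduction $\mathcal{S} \to \mathcal{S}(\mathbb{Z}/N\mathbb{Z})$ as a product over the primes $q \mid N$; moreover $\GL_2(\mathbb{Z}) \twoheadrightarrow \GL_2(\mathbb{F}_q)$ and $\Gamma_0(N) \twoheadrightarrow P(\mathbb{F}_q)$, where $P$ denotes the Siegel parabolic (matrices with lower-left block zero), so every move available modulo $q$ is realised by an integral group element. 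Over $\mathbb{F}_q$ the pair $(\overline{C}, \overline{D})$ satisfies $\rk_q(\overline{C}\mid\overline{D}) = 2$ by coprimality together with the symmetry $\overline{C}\,{}^t\overline{D} = \overline{D}\,{}^t\overline{C}$, and the two-sided action sends $\overline{C} \mapsto G\overline{C}A$ with $G, A \in \GL_2(\mathbb{F}_q)$ arbitrary; by the theory of the Smith normal form the only invariant of $\overline{C}$ under this action is its rank, so $\overline{C}$ can be brought to the standard matrix $\left(\begin{smallmatrix}1_r & 0 \\ 0 & 0\end{smallmatrix}\right)$ of the correct rank $r$, which is precisely the reduction of $M_\rho$ modulo $q$. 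The remaining freedom in the parabolic (the blocks $\overline{B}, \overline{D}_0$) together with the stabiliser of $\overline{C}$ and the coprimality and symmetry constraints then suffices to normalise $\overline{D}$ to the identity; I would verify this in the three rank cases separately, each being a short linear-algebra check that $(\overline{C}\mid\overline{D})$ is forced into the pair $(M_\rho, I) \bmod q$.

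Finally I would lift this mod-$N$ normalisation to an actual equality of double cosets. Having applied integral lifts of the moves above, we may assume $C \equiv M_\rho$ and $D \equiv I \bmod N$, so it remains to show that any coprime symmetric pair congruent to $(M_\rho, I)$ modulo $N$ already lies in $\GL_2(\mathbb{Z})(M_\rho, I)\,\Gamma_0(N)$. I expect this lifting to be the main obstacle, since it is exactly where the passage from congruences to genuine integral equivalence occurs. The natural route is to exploit that the principal congruence subgroup $\Gamma(N) = \ker\!\big(\Sp_4(\mathbb{Z}) \to \Sp_4(\mathbb{Z}/N\mathbb{Z})\big)$ is contained in $\Gamma_0(N)$: extending $(C,D)$ and $(M_\rho,I)$ to symplectic matrices $\gamma, \gamma_\rho$ having them as bottom rows, one seeks $u \in \Gamma_\infty$ with $\gamma_\rho^{-1} u\, \gamma \in \Gamma_0(N)$, which by the bijection of Lemma \ref{eisenstein-quotient-in-csp} is equivalent to the desired double-coset membership. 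Concretely this amounts to solving a congruence for a lower-left block modulo $N$, and the surjectivity of $\Sp_4(\mathbb{Z}) \to \Sp_4(\mathbb{Z}/N\mathbb{Z})$ guarantees the solution can be realised integrally precisely because the bottom rows already agree modulo $N$; this is essentially the computation carried out in \cite{Walling2012} \S2. Combining the three steps shows that $(C,D)$ and $(M_\rho, I)$ share a double coset exactly when their reductions do, i.e.\ exactly when $\rk_q(C) = \rk_q(M_\rho)$ for all $q \mid N$.
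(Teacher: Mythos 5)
The paper itself offers no proof of this lemma: it is declared ``readily verified'' with the details deferred to \cite{Walling2012} \S 2, so the comparison is really with that reference, and your argument is a faithful reconstruction of the standard one (rank invariance of $C$ under both actions for the forward direction; Chinese Remainder reduction, prime-by-prime normalisation, and an integral lifting for the converse). The forward direction as you give it is complete and correct. Two points in the converse deserve attention. First, the assertion that $\GL_2(\mathbb{Z}) \twoheadrightarrow \GL_2(\mathbb{F}_q)$ is false for $q \geq 5$: the image consists only of matrices of determinant $\pm 1$. This happens not to damage the argument, because the factor $A_\gamma$ in $\overline{C} \mapsto G\overline{C}A_\gamma$ genuinely ranges over all of $\GL_2(\mathbb{F}_q)$ (the Levi of the Siegel parabolic is the full $\GL_2$, and $\Gamma_0(N)$ surjects onto $P(\mathbb{F}_q)$), while all the left action needs to supply is transitivity of $\SL_2(\mathbb{F}_q)$ on lines to reach the standard rank-$r$ matrix; but the claim as written should be corrected.

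Second, the lifting step is, as you say, where the real content lies, and your justification for it is the thinnest part: surjectivity of $\Sp_4(\mathbb{Z}) \to \Sp_4(\mathbb{Z}/N\mathbb{Z})$ alone does not close it. What one actually does is take $u^{-1} = \left(\begin{smallmatrix} I & F \\ 0 & I\end{smallmatrix}\right) \in \Gamma_\infty$ with $F$ symmetric and demand that the lower-left block ${}^tE^{-1}C - M_\rho(EA+FC)$ of $\gamma_\rho^{-1}u^{-1}\gamma$ (here $E = I$) vanish modulo $N$; with $C \equiv M_\rho$, $D \equiv I \bmod N$ this reduces, prime by prime, to solving $M_\rho(I-A) \equiv M_\rho F M_\rho \bmod q$ for symmetric $F$, and solvability is exactly what the symplectic relations deliver: at $q \mid N_2$ the relation ${}^tAC = {}^tCA$ forces $A$ symmetric mod $q$ so $F \equiv I - A$ works, and at $q \mid N_1$ it forces $A_{12} \equiv 0 \bmod q$, which is precisely what puts the first row of $I-A$ into the image of $F \mapsto F_{11}\left(\begin{smallmatrix}1&0\\0&0\end{smallmatrix}\right)$. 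So the step does close, but via an explicit computation with the symmetric block of $\Gamma_\infty$ and the symplectic relations rather than as a formal consequence of surjectivity; this is the computation in \cite{Walling2012} \S 2 that the paper is pointing to. With these two repairs your outline is a correct proof.
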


\begin{corollary}  Let $(N_0, N_1, N_2)$ be a multiplicative partition of $N$, and let $p$ be a prime not dividing $N$.  Then
\[\begin{aligned} \GL_2(\mathbb{Z})(M_{(N_0, N_1, N_2)}, I)\Gamma_0(N) &= \GL_2(\mathbb{Z})(M_{(pN_0, N_1, N_2)}, I)\Gamma_0(Np) \\
&\qquad \cup \GL_2(\mathbb{Z})(M_{(N_0, pN_1, N_2)}, I)\Gamma_0(Np) \\
&\qquad \cup \GL_2(\mathbb{Z})(M_{(N_0, N_1, pN_2)}, I)\Gamma_0(Np). \end{aligned}\] \end{corollary}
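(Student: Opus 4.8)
The plan is to work entirely inside $\GL_2(\mathbb{Z}) \backslash \mathcal{S}$, where each expression $\GL_2(\mathbb{Z})(M_\rho, I)\Gamma_0(M)$ is simply the $\Gamma_0(M)$-orbit of the $\GL_2(\mathbb{Z})$-coset of $(M_\rho, I)$, and to reduce the whole statement to a bookkeeping of ranks via Lemma \ref{csp-orbit-rank-characterisation}. The first observation is that since $N$ is squarefree and $p \nmid N$, the product $Np$ is again squarefree and $(pN_0, N_1, N_2)$, $(N_0, pN_1, N_2)$, $(N_0, N_1, pN_2)$ are genuine multiplicative partitions of $Np$; hence Lemma \ref{csp-orbit-rank-characterisation} applies verbatim at level $Np$, where $\Gamma_0(Np)$-orbits are pinned down by the ranks $\rk_q(C)$ ranging over all $q \mid Np$, that is, over all $q \mid N$ together with $q = p$.

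Next I would record the ranks of the matrices involved. Directly from the defining congruences for $M_\rho$, a prime $q \mid N$ satisfies $\rk_q(M_{(N_0,N_1,N_2)}) = i$ precisely when $q \mid N_i$. Consequently each of $M_{(pN_0,N_1,N_2)}$, $M_{(N_0,pN_1,N_2)}$, $M_{(N_0,N_1,pN_2)}$ has exactly the same ranks as $M_{(N_0,N_1,N_2)}$ at every $q \mid N$, while at the prime $p$ these three matrices have ranks $0$, $1$, and $2$ respectively, since $p$ divides the zeroth, first, and second slot of the partition in turn.

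For the inclusion $\supseteq$ I would note that $\Gamma_0(Np) \subseteq \Gamma_0(N)$, so each term on the right is a $\Gamma_0(Np)$-orbit contained in a single $\Gamma_0(N)$-orbit; and by the rank computation above together with Lemma \ref{csp-orbit-rank-characterisation} applied at level $N$, the base points $\GL_2(\mathbb{Z})(M_{(pN_0,N_1,N_2)}, I)$ and its two companions all lie in the $\Gamma_0(N)$-orbit of $\GL_2(\mathbb{Z})(M_{(N_0,N_1,N_2)}, I)$. Hence their full $\Gamma_0(Np)$-orbits are contained in the left-hand side. For the reverse inclusion $\subseteq$, I would take an arbitrary $\GL_2(\mathbb{Z})(C, D)$ in the left-hand orbit, so that $\rk_q(C) = \rk_q(M_{(N_0,N_1,N_2)})$ for all $q \mid N$. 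Its rank at $p$ is one of $0$, $1$, $2$, and according to that value I select the matching partition among $(pN_0,N_1,N_2)$, $(N_0,pN_1,N_2)$, $(N_0,N_1,pN_2)$; by the rank computation the chosen representative agrees with $(C,D)$ in rank at every $q \mid Np$, so Lemma \ref{csp-orbit-rank-characterisation} at level $Np$ places $\GL_2(\mathbb{Z})(C,D)$ in the corresponding right-hand orbit.

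Combining the two inclusions yields the claimed equality, and the three orbits are in fact pairwise disjoint since they are distinguished by their rank at $p$. I expect no serious obstacle here: the only point requiring care is the clean application of Lemma \ref{csp-orbit-rank-characterisation} at the enlarged level $Np$, together with the structural observation that passing from level $N$ to level $Np$ introduces exactly one new invariant, namely $\rk_p(C) \in \{0,1,2\}$, and that this single new invariant is precisely what indexes the three-fold splitting.
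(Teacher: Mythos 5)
Your proposal is correct and follows exactly the route the paper intends: the paper's proof is the single line ``Follows immediately from Lemma \ref{csp-orbit-rank-characterisation},'' and your argument is precisely the fleshed-out version of that, tracking the ranks $\rk_q(C)$ for $q \mid N$ and the new invariant $\rk_p(C) \in \{0,1,2\}$ which indexes the three-fold splitting. No discrepancy to report.
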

\begin{proof}  Follows immediately from Lemma \ref{csp-orbit-rank-characterisation}.  \end{proof}

\noindent Let $\Gamma_\infty \gamma_0 \Gamma_0(N)$ be a cusp for $\Gamma_0(N)$.  Since the summands for $\mathbb{E}_{\gamma_0}$ depend only on the bottom (block) row, Lemma \ref{eisenstein-quotient-in-csp} allows us to express $\mathbb{E}_{\gamma_0}$ as a sum over the $\Gamma_0(N)$ orbit of $\GL_2(\mathbb{Z})(C, D)$ for some coprime symmetric pair $(C, D)$.  By Lemma \ref{csp-orbit-rank-characterisation} we may assume that this is the $\Gamma_0(N)$ orbit of $\GL_2(\mathbb{Z})(M_\rho, I)$ for some $M_\rho$ as defined above.  We therefore take our choice of representative $\gamma_0$ to be $\left(\begin{smallmatrix} I & 0 \\ M_\rho & I\end{smallmatrix}\right)$, and hence identify a cusp $\Gamma_\infty \gamma_0 \Gamma_0(N)$ with a matrix $M_\rho$.  $M_\rho$ is in turn identified with a triple $(N_0, N_1, N_2)$ such that $N_0N_1N_2=N$ by letting $N_i$ be the product of all primes $q \mid n$ at which $M_\rho$ has rank $i$.  For a triple $(N_0, N_1, N_2)$ thus corresponding to a cusp $\Gamma_\infty \gamma_0 \Gamma_0(N)$ we define $\mathbb{E}_{(N_0, N_1, N_2)} = \mathbb{E}_{\gamma_0}$.

\begin{corollary}  Let $N$ be square-free, $(N_0, N_1, N_2)$ such that $N_0N_1N_2 = N$, and let $p$ be a prime not dividing $N$.  Then
\[\mathbb{E}_{(N_0, N_1, N_2)} = \mathbb{E}_{(pN_0, N_1, N_2)} + \mathbb{E}_{(N_0, pN_1, N_2)} + \mathbb{E}_{(N_0, N_1, pN_2)}.\] \end{corollary}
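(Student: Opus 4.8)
The plan is to deduce this identity of Eisenstein series directly from the set-theoretic decomposition of double cosets in the preceding Corollary, using Lemma \ref{eisenstein-quotient-in-csp} to translate between cosets of $\Gamma_\infty$ and $\GL_2(\mathbb{Z})$-orbits of coprime symmetric pairs. The first thing I would record is that the summand $\det(C\tau + D)^{-k}$ descends to a well-defined function of the $\GL_2(\mathbb{Z})$-orbit $\GL_2(\mathbb{Z})(C,D)$: for $G \in \GL_2(\mathbb{Z})$ one has $\det(GC\tau + GD)^{-k} = \det(G)^{-k}\det(C\tau + D)^{-k}$, and since $\det G = \pm 1$ and $k$ is even this prefactor is $1$. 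Combined with Lemma \ref{eisenstein-quotient-in-csp}(1), this lets me rewrite each Eisenstein series as a sum of $\det(C\tau+D)^{-k}$ over a set of representatives for the $\GL_2(\mathbb{Z})$-orbits lying in the relevant double coset; the summand is literally the same in every case, and only the indexing set changes. Note that $\Gamma_\infty$ is independent of the level, so the bijection is the same at level $N$ and at level $Np$.

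Concretely I would write $\mathbb{E}_{(N_0,N_1,N_2)} = \sum \det(C\tau+D)^{-k}$, the sum taken over $\GL_2(\mathbb{Z})$-orbit representatives in the double coset $\GL_2(\mathbb{Z})(M_{(N_0,N_1,N_2)},I)\,\Gamma_0(N)$, and likewise express each of the three terms on the right as the analogous sum over the corresponding level $Np$ double coset. The preceding Corollary then states precisely that the level $N$ double coset equals the union of these three level $Np$ double cosets, so the identity reduces to checking that this union is disjoint and hence induces a partition of the underlying set of $\GL_2(\mathbb{Z})$-orbits, after which the result is immediate by summing the common orbit-invariant summand.

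The only real content is this disjointness, and it follows from Lemma \ref{csp-orbit-rank-characterisation}: the matrices $M_{(pN_0,N_1,N_2)}$, $M_{(N_0,pN_1,N_2)}$, $M_{(N_0,N_1,pN_2)}$ have ranks $0$, $1$, $2$ modulo $p$ respectively while agreeing with $M_{(N_0,N_1,N_2)}$ at every $q \mid N$, so the three level $Np$ double cosets are exactly the pairs in the level $N$ double coset with $\rk_p(C)$ equal to $0$, $1$, $2$. Since a coprime symmetric pair satisfies $\rk_p(C) \in \{0,1,2\}$ with exactly one value occurring, these three sets partition the level $N$ double coset, and therefore partition its $\GL_2(\mathbb{Z})$-orbits. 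I expect no genuine obstacle here, since the work has all been front-loaded into Lemmas \ref{eisenstein-quotient-in-csp} and \ref{csp-orbit-rank-characterisation} and the preceding Corollary, and the rearrangement of the sum along the partition is harmless as the series converges absolutely for $k \ge 4$. The one point to be careful about is the orbit bookkeeping across the two levels, namely confirming that passing from level $N$ to level $Np$ neither merges two orbits nor omits any, which is exactly what the rank characterisation at the extra prime $p$ guarantees.
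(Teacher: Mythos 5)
Your proposal is correct and follows essentially the same route as the paper, which simply deduces the identity from the preceding double-coset decomposition via the orbit description of Lemmas \ref{eisenstein-quotient-in-csp} and \ref{csp-orbit-rank-characterisation}; you have merely spelled out the disjointness and well-definedness details that the paper leaves as ``follows immediately.''
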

\begin{proof}  Follows immediately from Corollary 2.3.  \end{proof}

\noindent \textbf{Hecke operators.} Let $p$ be a prime not dividing $N$.  Define the Hecke operator $T(p)$ by
\[f | T(p) = p^{k-3} \sum_{i} f |_k \gamma_i\]
where 
\[\Gamma_0(N) \left(\begin{matrix}1_n & \\ & p 1_n \end{matrix}\right)\Gamma_0(N) = \bigsqcup_i \Gamma_0(N)\gamma_i.\]
Similarly, define another Hecke operator $T_1(p^2)$ by 
\[f | T_1(p^2) = p^{k-3} \sum_{i} f |_k \gamma_i\]
where 
\[\Gamma_0(N) \left(\begin{smallmatrix} 1 & & & \\ & p & & \\ & & p^2 & \\ & & & p \end{smallmatrix}\right) \Gamma_0(N) = \bigsqcup \Gamma_0(N) \gamma_i.\]
When $p \mid N$ we define the Hecke operators in exactly the same way, but we use the alternative notation $U(p)$, $U_1(p^2)$ to emphasise that $p \mid N$ affects the representatives in the coset decompositions.  Note that \cite{Walling2012} uses a definition of Hecke operators that is equivalent to our double coset definition except that the representative matrices differ by a factor of $p$.  This makes no difference because under the normalisation of the slash operator used by us (in (\ref{slash-operator-normalisation})) and \cite{Walling2012} scalars act trivially.  We now quote the results of \cite{Walling2012} in the case of trivial character:

\begin{proposition}\label{Up-action}  The action of the Hecke operators $U(p)$ on the level $Np$ Eisenstein series transforming with trivial character are as follows:
\[ \begin{aligned} \mathbb{E}_{(pN_0, N_1, N_2)} | U(p) &= \mathbb{E}_{(pN_0, N_1, N_2)} + (1-p^{-1})\mathbb{E}_{(N_0, pN_1, N_2)} + (1-p^{-1})\mathbb{E}_{(N_0, N_1, pN_2)}, \\
\mathbb{E}_{(N_0, pN_1, N_2)} | U(p) &= p^{k-1}\mathbb{E}_{(N_0, pN_1, N_2)} + (p^{k-1}-p^{k-3})\mathbb{E}_{(N_0, N_1, pN_2)}, \\
\mathbb{E}_{(N_0, N_1, pN_2)} | U(p) &= p^{2k-3} \mathbb{E}_{(N_0, N_1, pN_2)}. \end{aligned} \] \end{proposition}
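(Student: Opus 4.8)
The plan is to reduce everything to a purely local computation at $p$ and then expand the action of $U(p)$ directly in the coprime-symmetric-pair model of Lemma \ref{eisenstein-quotient-in-csp}. The structural point is that $U(p)$ is assembled from matrices whose similitude is a power of $p$ and which are invertible at every prime $q \mid N$; hence the correspondence preserves each rank invariant $\rk_q$ for $q \mid N$, and the tame partition $(N_0, N_1, N_2)$ of $N$ is carried along untouched. By Lemma \ref{csp-orbit-rank-characterisation} the only datum that can move is the rank at $p$, and the three series on the right are exactly the assignments $\rk_p = 0, 1, 2$ of the extra prime (into the $N_0$-, $N_1$-, and $N_2$-slot respectively). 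So the proposition amounts to determining how the Hecke correspondence redistributes $\rk_p(C)$, together with the accompanying powers of $p$.

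Concretely I would fix the standard block upper-triangular representatives $\gamma_i$ for $\Gamma_0(Np)\left(\begin{smallmatrix} 1_2 & \\ & p1_2 \end{smallmatrix}\right)\Gamma_0(Np) = \bigsqcup_i \Gamma_0(Np)\gamma_i$, of which there are $1 + p + p^2 + p^3$. Writing each $\mathbb{E}_\rho$ as $\sum \det(C\tau + D)^{-k}$ over the $\Gamma_0(Np)$-orbit of $\GL_2(\mathbb{Z})(M_\rho, I)$, I apply (\ref{slash-operator-normalisation}) termwise and use the cocycle identity $\det\!\big(C\,(\gamma_i\langle\tau\rangle) + D\big)\,\det(C_{\gamma_i}\tau + D_{\gamma_i}) = \det(C'\tau + D')$, where $(C,D)$ is the bottom row of a matrix $\delta$ ranging over the orbit and $(C',D')$ is the bottom block row of $\delta\gamma_i$. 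This collapses $\mathbb{E}_\rho \,|\, U(p)$ into a single sum of $\det(C'\tau + D')^{-k}$ weighted by $p^{k-3}\lambda(\gamma_i)^{nk/2} = p^{k-3}\cdot p^{k} = p^{2k-3}$ (here $n = 2$ and $\lambda(\gamma_i) = p$). Each transformed pair is then factored as $(C', D') = G(\tilde C, \tilde D)$ with $(\tilde C, \tilde D)$ coprime and $G \in \mathbb{Z}^{2\times 2}$; since $\det \gamma_i = p^2$ one has $\det G \in \{1, p, p^2\}$, and homogeneity gives $\det(C'\tau + D')^{-k} = (\det G)^{-k}\det(\tilde C\tau + \tilde D)^{-k}$, contributing a factor $p^{-jk}$ when $\det G = p^{j}$. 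Reassembling the coprime pairs through Lemmas \ref{eisenstein-quotient-in-csp} and \ref{csp-orbit-rank-characterisation} recovers genuine natural-basis series indexed by the value of $\rk_p(\tilde C)$.

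What remains is the counting at $p$: for a bottom row of prescribed $\rk_p(C)$ I must record, over the $1 + p + p^2 + p^3$ representatives, the valuation $j = \ord_p(\det G)$ and the resulting rank $\rk_p(\tilde C)$, and then sum the contributions $p^{2k-3}\cdot p^{-jk}$ against the $\GL_2(\mathbb{Z})$- and $\Gamma_0(Np)$-orbit identifications. This bookkeeping is the main obstacle, and it is where the three regimes of the answer come from: the full-rank input ($\rk_p = 2$) is the cleanest, as the effective contributions keep $\tilde C$ invertible mod $p$ with $j = 0$, producing the eigenvalue $p^{2k-3}$; the mixed-rank contributions carry $j = 1$, turning the weight $p^{2k-3}$ into $p^{k-3}$ and producing the $p^{k-1}$ and $p^{k-1} - p^{k-3}$ entries once the corresponding orbit counts are included; and the rank-$0$ input forces $j = 2$, so that the leading $p^{2k-3}$ is cut to $p^{-3}$ and must be restored by orbit counts of order $p^{3}$, the appearance of indices of the form $p^{j} - p^{j-1}$ in these counts being exactly the signature that yields the bounded factors $1$ and $1 - p^{-1}$. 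Finally I would cross-check the resulting matrix against the Corollary immediately preceding this Proposition, whose summation relation must stay compatible with the computed $U(p)$-action.
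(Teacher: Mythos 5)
The paper does not actually prove this proposition: it is quoted verbatim from \cite{Walling2012}, Propositions 3.5--3.7, so the real comparison is between your sketch and Walling's computation. Your overall strategy --- write each $\mathbb{E}_\rho$ as a sum over the $\Gamma_0(Np)$-orbit of $\GL_2(\mathbb{Z})(M_\rho,I)$ via Lemma \ref{eisenstein-quotient-in-csp}, act termwise by the coset representatives, extract the content $G$ of the transformed pair, and sort the resulting coprime pairs by $\rk_p$ using Lemma \ref{csp-orbit-rank-characterisation} (the invariants at $q \mid N$ being untouched) --- is indeed the shape of that argument. But as written there are two problems.

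First, a concrete error: the double coset $\Gamma_0(Np)\left(\begin{smallmatrix}1_2 & \\ & p1_2\end{smallmatrix}\right)\Gamma_0(Np)$ has exactly $p^3$ right cosets when $p$ divides the level, all represented by $\left(\begin{smallmatrix}1_2 & B \\ 0 & p1_2\end{smallmatrix}\right)$ with $B$ symmetric mod $p$. The count $1+p+p^2+p^3$ you quote is the index of $T(p)$ at a prime \emph{not} dividing the level; the extra $1+p+p^2$ representatives do not lie in the $U(p)$ double coset over $\Gamma_0(Np)$, and including them computes the wrong operator. (A sanity check for $p^3$: the normalisation gives $p^{k-3}\cdot p^{k}\cdot p^{-2k}\cdot p^{3}=1$, recovering $a(T;f|U(p))=a(pT;f)$ from Proposition \ref{action-on-fc}.) Second, and decisively, the entire content of the proposition is the bookkeeping you explicitly defer: for each input rank $\rk_p(M_\rho)\in\{0,1,2\}$ one must enumerate, over the $p^3$ representatives and modulo the $\GL_2(\mathbb{Z})$- and $\Gamma_0(Np)$-identifications, how many transformed pairs have content with $\ord_p(\det G)=j$ and land in each output rank class; the coefficients $1$, $1-p^{-1}$, $p^{k-1}$, $p^{k-1}-p^{k-3}$, $p^{2k-3}$ are exactly the outcome of that count and are nowhere derived. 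Your qualitative remarks about which $j$ accompanies which input rank are plausible (e.g.\ $C\equiv 0 \bmod p$ does force $p^2 \mid \det G$), but without the actual enumeration none of the three stated identities is established. Either carry out that count in full or, as the paper does, cite \cite{Walling2012}.
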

\begin{proof}  These are special cases of \cite{Walling2012} Propositions 3.5, 3.6, and 3.7.  \end{proof}

\begin{proposition}\label{U1p2-action}  The action of the Hecke operators $U_1(p^2)$ on the level $Np$ Eisenstein series transforming with trivial character are as follows:
\[ \begin{aligned} \mathbb{E}_{(pN_0, N_1, N_2)} | U_1(p^2) &= (p+1)\mathbb{E}_{(pN_0, N_1, N_2)} + (p^{k-1} + 1)(1-p^{-1}) \mathbb{E}_{(N_0, pN_1, N_2)} + (1-p^{-2})\mathbb{E}_{(N_0, N_1, pN_2)}, \\
\mathbb{E}_{(N_0, pN_1, N_2)} | U_1(p^2) &= (p^{2k-2} + p)\mathbb{E}_{(N_0, pN_1, N_2)} + (p^{k-2} + 1)(p-p^{-1})\mathbb{E}_{(N_0, N_1, pN_2)},\\
\mathbb{E}_{(N_0, N_1, pN_2)} | U_1(p^2) &= (p^{2k-2} + p^{2k-3})\mathbb{E}_{(N_0, N_1, pN_2)}. \end{aligned} \] \end{proposition}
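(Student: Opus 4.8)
The plan is to proceed in exact parallel with Proposition \ref{Up-action}: the three formulas are the specialization to the trivial character of Walling's general $T_1(p^2)$-action formulas in \cite{Walling2012}, so the essential content is to identify the correct specialization and to verify that the generic coefficients collapse to the stated values. I would first fix explicit coset representatives $\gamma_i$ for the double coset defining $U_1(p^2)$, namely $\Gamma_0(Np)\,\mathrm{diag}(1,p,p^2,p)\,\Gamma_0(Np) = \bigsqcup_i \Gamma_0(Np)\gamma_i$, chosen (as in \cite{Walling2012}) in a block-triangular normal form so that the induced action on coprime symmetric pairs is transparent.

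Next I would use the description of $\mathbb{E}_\rho$ as the sum $\sum \det(C\tau+D)^{-k}$ over the $\Gamma_0(Np)$-orbit of $\GL_2(\mathbb{Z})(M_\rho, I)$ furnished by Lemmas \ref{eisenstein-quotient-in-csp} and \ref{csp-orbit-rank-characterisation}. Applying $U_1(p^2)$ slashes each summand by the $\gamma_i$, which, via the automorphy cocycle for $\det(C\tau+D)$, transports the bottom row $(C,D)$ to $(C,D)\gamma_i$ and regroups the whole expression into a new sum over coprime symmetric pairs. Each resulting pair $(C',D')$ lies in a definite cusp orbit, determined by the invariants $\rk_q(C')$ for $q \mid Np$ (Lemma \ref{csp-orbit-rank-characterisation}); since $p \nmid N$ only the rank at $p$ can change, so the output is a combination of exactly the three basis vectors $\mathbb{E}_{(pN_0,N_1,N_2)}$, $\mathbb{E}_{(N_0,pN_1,N_2)}$, $\mathbb{E}_{(N_0,N_1,pN_2)}$. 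The coefficient of each is read off by counting, for the given input, how many representatives are carried into each target rank at $p$, weighted by the $p$-powers produced by \eqref{slash-operator-normalisation} together with the $p^{k-3}$ prefactor.

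The triangular shape of the three formulas is forced structurally: the operator cannot lower the rank of $C$ at $p$, so the full-rank series $\mathbb{E}_{(N_0,N_1,pN_2)}$ is an eigenvector with eigenvalue $p^{2k-2}+p^{2k-3}$, the rank-one series $\mathbb{E}_{(N_0,pN_1,N_2)}$ maps into the span of itself and the full-rank series, and only the rank-zero series $\mathbb{E}_{(pN_0,N_1,N_2)}$ spreads across all three. This also pins down the diagonal entries $p+1$, $p^{2k-2}+p$, and $p^{2k-2}+p^{2k-3}$ as self-interaction counts and immediately explains which off-diagonal entries must vanish.

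The main obstacle is the exact enumeration behind the off-diagonal coefficients, in particular the mixed factors $(p^{k-1}+1)(1-p^{-1})$ and $(p^{k-2}+1)(p-p^{-1})$: these combine a coset count (supplying the $p^{k-j}$ growth coming from $\det(CZ+D)$ picking up extra factors of $p$ when the reduced pair degenerates mod $p$) with a $(1-p^{-1})$-type density correction counting how often that degeneration occurs. Keeping this bookkeeping consistent with the normalization \eqref{slash-operator-normalisation}, and reconciling it with Walling's conventions (whose representatives differ from ours by a scalar, which acts trivially as already noted), is the delicate part; once done it reproduces precisely the specialization of the relevant $T_1(p^2)$-propositions of \cite{Walling2012} to the trivial character, giving the three displayed identities.
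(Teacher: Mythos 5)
Your proposal is correct and ultimately takes the same route as the paper: the paper's entire proof is the one-line observation that these formulas are the special cases of \cite{Walling2012} Propositions 3.8, 3.9, and 3.10 at trivial character, and your argument likewise defers the actual verification of the off-diagonal coefficients to those propositions. The additional sketch you give (coset representatives, the action on coprime symmetric pairs via Lemmas \ref{eisenstein-quotient-in-csp} and \ref{csp-orbit-rank-characterisation}, and the rank-at-$p$ triangularity) is a reasonable outline of how Walling's propositions are themselves proved, but it is scaffolding beyond what the paper supplies or requires here.
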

\begin{proof}  These are special cases of \cite{Walling2012} Propositions 3.8, 3.9, and 3.10.  \end{proof}

\begin{proposition}\label{Tp-action}  The action of the Hecke operator $T(p)$ on the level $N$ (where $p \nmid N$) Eisenstein series transforming with trivial character is:
\[\mathbb{E}_{(N_0, N_1, N_2)} | T(p) = (p^{2k-3} + p^{k-1} + p^{k-2} + 1)\mathbb{E}_{(N_0, N_1, N_2)}.\] \end{proposition}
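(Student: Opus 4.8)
The plan is to show that every natural-basis Eisenstein series $\mathbb{E}_{(N_0,N_1,N_2)}$ is a $T(p)$-eigenform and then to identify the eigenvalue. First I would record the two soft facts that make the problem finite: since $p\nmid N$, the operator $T(p)$ preserves $\mathcal{M}_k(N)$ and carries Siegel-Eisenstein series to Siegel-Eisenstein series. Now the natural basis is pinned down by its boundary values (value $1$ at its own $0$-cusp and $0$ at the others), and the cusps are labelled by the rank data $\rk_q(C)$ at the primes $q\mid N$. Because the coset representatives defining $T(p)$ differ from the identity only by $p$-power entries, with $p$ prime to $N$ they cannot change any $\rk_q(C)$ for $q\mid N$; hence $T(p)$ fixes every cusp-type and must act diagonally on the natural basis. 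This gives $\mathbb{E}_{(N_0,N_1,N_2)}\mid T(p)=\lambda\,\mathbb{E}_{(N_0,N_1,N_2)}$, and the same $p$-locality shows $\lambda$ is independent of the partition $(N_0,N_1,N_2)$.

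It then remains to compute the single scalar $\lambda$, and here I would use the splitting relation $\mathbb{E}_{(N_0,N_1,N_2)}=\mathbb{E}_{(pN_0,N_1,N_2)}+\mathbb{E}_{(N_0,pN_1,N_2)}+\mathbb{E}_{(N_0,N_1,pN_2)}$ in tandem with the level-$Np$ formulae of Propositions \ref{Up-action} and \ref{U1p2-action}. Write $E:=\mathbb{E}_{(N_0,N_1,N_2)}$ and let $E_0,E_1,E_2$ denote the three level-$Np$ series on the right, so $E=E_0+E_1+E_2$ as functions on $\mathbb{H}_2$. The decisive observation is that $T(p)$ and the level-$Np$ operators all arise from the same similitude matrices $\mathrm{diag}(1,1,p,p)$ and $\mathrm{diag}(1,p,p^2,p)$: setting $\beta=\mathrm{diag}(1,1,p,p)$, the level-$N$ double coset $\Gamma_0(N)\beta\Gamma_0(N)$ breaks up into finitely many $\Gamma_0(Np)$-double cosets, and because $E$ is already $\Gamma_0(N)$-invariant one can rewrite $E\mid T(p)$ as an explicit integer combination of the level-$Np$ Hecke operators of Propositions \ref{Up-action} and \ref{U1p2-action} applied to $E_0,E_1,E_2$. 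Substituting the given matrices, collecting the coefficient of each $E_i$, and imposing that the output is proportional to $E_0+E_1+E_2$ (as Step one guarantees) then forces $\lambda=p^{2k-3}+p^{k-1}+p^{k-2}+1$.

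I expect the main obstacle to be precisely the double-coset bookkeeping of the second paragraph: determining which $\Gamma_0(Np)$-double cosets occur inside $\Gamma_0(N)\beta\Gamma_0(N)$ and with what multiplicities (equivalently, expressing the spherical operator $T(p)$ inside the parahoric Hecke algebra at $p$), and matching the left-coset counts so that the slash-sum defining $T(p)$ is reproduced exactly. Once that translation is in hand the rest is the routine linear algebra of inserting Propositions \ref{Up-action} and \ref{U1p2-action} and checking that the resulting coefficient vector is a multiple of $(1,1,1)$, whereupon $\lambda$ reads off as the common entry. An alternative that sidesteps the combinatorics entirely is to invoke the $p$-locality already used in Step one: $\lambda$ depends only on the unramified data at $p$, which is unchanged as $N$ ranges over integers prime to $p$ and is the same for every cusp-type, so it suffices to evaluate $\lambda$ once at $N=1$, where the Eisenstein subspace is one-dimensional and $\lambda$ coincides with the classical degree-$2$, level-$1$ eigenvalue recorded in \cite{EichlerZagier1985}.
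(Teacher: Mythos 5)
The paper does not actually prove this proposition: it is quoted as a special case of \cite{Walling2012}, Proposition 3.3, so any self-contained argument is already a departure from the text. Your first step is sound in outline: the left-coset representatives of $\Gamma_0(N)\,\mathrm{diag}(1,1,p,p)\,\Gamma_0(N)$ can be taken block upper-triangular with $A$-block of $p$-power determinant, so after renormalising each resulting symmetric pair to a coprime one (dividing by a power of $p$, which is a unit mod every $q\mid N$) the rank data $\rk_q$ of Lemma \ref{csp-orbit-rank-characterisation} is unchanged; together with the observation that the multiplicity function on coprime symmetric pairs must be constant on $\Gamma_0(N)$-orbits (because $\mathbb{E}_{(N_0,N_1,N_2)}|T(p)$ is $\Gamma_0(N)$-invariant), this does force diagonal action on the natural basis.

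The gap is in the eigenvalue computation. Your ``decisive observation'' cannot work as stated: similitude is constant on a double coset, the coset defining $T(p)$ has similitude $p$, while $U_1(p^2)$ is built from $\mathrm{diag}(1,p,p^2,p)$ of similitude $p^2$, so $U_1(p^2)$ cannot occur in any decomposition of $\Gamma_0(N)\,\mathrm{diag}(1,1,p,p)\,\Gamma_0(N)$ into $\Gamma_0(Np)$-double cosets. The cosets that do occur are the Weyl conjugates of $\mathrm{diag}(1,1,p,p)$, such as $\mathrm{diag}(p,p,1,1)$ and $\mathrm{diag}(1,p,p,1)$; only the first conjugate, $\mathrm{diag}(1,1,p,p)$ itself, gives $U(p)$, and the action of the others on the level-$Np$ series is supplied neither by Propositions \ref{Up-action} and \ref{U1p2-action} nor anywhere else in the paper, so the linear algebra you describe has no input. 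Your fallback --- that $\lambda$ is ``$p$-local'' and hence equals its value at $N=1$ --- is the right heuristic but is asserted rather than proven; making it precise amounts to redoing the orbit count at $p$, which is essentially Walling's computation. Note that for the cusp $(N,1,1)$ there is a genuinely cheap finish: once diagonal action is known, Proposition \ref{action-on-fc} at $T=0$ gives $a(0;f|T(p))=(1+p^{k-1}+p^{k-2}+p^{2k-3})\,a(0;f)$, and $a(0;\mathbb{E}_{(N,1,1)})=1\neq 0$ pins down $\lambda$ there. But the other cusps have vanishing constant term, and their nonzero Fourier coefficients are exactly what the paper is trying to compute, so without a proved cusp-independence statement this does not close the argument for them.
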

\begin{proof}  This is a special case of \cite{Walling2012} Proposition 3.3.  \end{proof}

Let $f \in \mathcal{M}_k(N)$.  It is well-known that $f$ has a Fourier expansion supported on $2$-rowed, half-integral, positive semi-definite, symmetric matrices $T$.  We use the notation
\begin{equation}\label{fourier-expansion} f(Z) = \sum_T a(T; f) e(\tr(TZ))\end{equation}
where $e(z) = e^{2 \pi i z}$.  We will use the following result from \cite{HafnerWalling2002} for the action of the Hecke operators $U(p)$, $U_1(p^2)$, and $T(p)$ on Fourier expansions:

\begin{proposition}\label{action-on-fc}  Let $f(Z) = \sum_T a(T; f) e(\tr(TZ)) \in \mathcal{M}_k(N)$.  For $M$ any matrix, write $T[M] = {}^tMTM$.  Then, for $p \mid N$,
\[\begin{aligned} a(T; f|U(p)) &= a(pT; f), \\
a(T; f|U_1(p^2)) &= \sum_{\alpha \bmod p} a\left(T\left[\left(\begin{matrix}1 & 0 \\ \alpha & p \end{matrix}\right)\right]; f\right) + a\left(T\left[\left(\begin{matrix}p & 0 \\ 0 & 1 \end{matrix}\right)\right]; f\right), \end{aligned}\]
and, for $p \nmid N$,
\[a(T; f|T(p)) = a(pT; f) +  p^{k-2} \left(\sum_{\alpha \bmod p} a\left(\frac{1}{p}T\left[\left(\begin{matrix}1 & 0 \\ \alpha & p \end{matrix}\right)\right]; f\right) + a\left(\frac{1}{p} T\left[\left(\begin{matrix}p & 0 \\ 0 & 1 \end{matrix}\right)\right]; f\right)\right) + p^{2k-3}a\left(\frac{1}{p}T; f\right).\] \end{proposition}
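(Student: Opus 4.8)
\noindent The plan is to prove all three formulae by the same mechanism: choose block upper-triangular coset representatives, apply the normalised slash operator to each, and then unfold the Fourier expansion to read off the coefficient of each exponential. First I would record that for each of the double cosets defining $T(p)$, $U(p)$ and $U_1(p^2)$ one may take the representatives in block upper-triangular form $\gamma_i = \left(\begin{smallmatrix} A_i & B_i \\ 0 & D_i\end{smallmatrix}\right)$, since the lower-left block of any representative can be cleared using the upper unipotent part of $\Gamma_0(N)$. Writing $\lambda = \lambda(\gamma_i)$ for the similitude (so $\lambda = p$ for $T(p)$ and $U(p)$, and $\lambda = p^2$ for $U_1(p^2)$), the defining relations of $\GSp_4$ give ${}^tA_i D_i = \lambda 1_2$ and ${}^tB_i D_i$ symmetric, whence $A_i = \lambda\,{}^tD_i^{-1}$ and, by (\ref{slash-operator-normalisation}) with $n=2$ (so that $\lambda^{nk/2} = \lambda^{k}$),
\[(f|_k\gamma_i)(Z) = \lambda^{k}\det(D_i)^{-k}\, f\big((A_iZ + B_i)D_i^{-1}\big).\]

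Next I would substitute the Fourier expansion (\ref{fourier-expansion}), writing $f(Z) = \sum_R a(R;f)\,e(\tr(RZ))$, and change the index of summation. Using $A_i = \lambda\,{}^tD_i^{-1}$ one computes $\tr\big(R(A_iZ+B_i)D_i^{-1}\big) = \tr\big(\lambda R[{}^tD_i^{-1}]\,Z\big) + \tr\big(R B_i D_i^{-1}\big)$, so that setting $T = \lambda R[{}^tD_i^{-1}]$, equivalently $R = \tfrac1\lambda T[{}^tD_i]$, the coefficient of $e(\tr(TZ))$ in $f|T(p)$ becomes
\[a(T; f|T(p)) = p^{k-3}\sum_i \lambda^{k}\det(D_i)^{-k}\, a\!\left(\tfrac1\lambda T[{}^tD_i];\,f\right)\sum_{B_i} e\!\left(\tr\!\big(\tfrac1\lambda T[{}^tD_i]\,B_i D_i^{-1}\big)\right),\]
with the obvious analogues for $U(p)$ and $U_1(p^2)$. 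The representatives then organise themselves by the elementary divisors of $D_i$. For $T(p)$ (where $p\nmid N$) there are exactly three types: the $p^3$ representatives with $D_i = p1_2$ (and $B_i$ running over symmetric matrices modulo $p$), which give $\tfrac1\lambda T[{}^tD_i] = pT$ and contribute $a(pT;f)$; the $p(p+1)$ representatives whose $D_i$ has elementary divisors $(1,p)$, for which ${}^tD_i$ falls into the $p+1$ classes represented by $\left(\begin{smallmatrix}1&0\\ \alpha&p\end{smallmatrix}\right)$ ($\alpha\bmod p$) and $\left(\begin{smallmatrix}p&0\\0&1\end{smallmatrix}\right)$, contributing the middle terms $\tfrac1p T[{}^tD_i]$; and the single representative $\left(\begin{smallmatrix}p1_2 & 0\\ 0 & 1_2\end{smallmatrix}\right)$ with $D_i = 1_2$, contributing $a(\tfrac1p T;f)$.

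For $U(p)$ and $U_1(p^2)$, where $p\mid N$, the same unfolding applies verbatim; what changes is which representatives occur, because when $p\mid N$ the matrix $\left(\begin{smallmatrix}p1_2 & 0\\ 0 & 1_2\end{smallmatrix}\right)$ and those of elementary-divisor type $(1,p)$ no longer lie in the double coset of $\left(\begin{smallmatrix}1_2 & 0\\ 0 & p1_2\end{smallmatrix}\right)$ modulo $\Gamma_0(N)$ on both sides (the congruence $C\equiv 0 \bmod N$ obstructs the Weyl-type elements that would relate them). For $U(p)$ this leaves only the family $D_i = p1_2$, which is precisely why that formula collapses to the single term $a(pT;f)$; a similar analysis for $U_1(p^2)$ leaves the representatives reproducing the superlattice sum $\sum_{\alpha\bmod p} a(T[\left(\begin{smallmatrix}1&0\\ \alpha&p\end{smallmatrix}\right)];f) + a(T[\left(\begin{smallmatrix}p&0\\0&1\end{smallmatrix}\right)];f)$, where now $\lambda = p^2$ cancels exactly so that no factor $\tfrac1p$ remains. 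In each case I would confirm the claimed coefficient by multiplying the prefactor $p^{k-3}\lambda^{k}\det(D_i)^{-k}$ by the value of the corresponding inner sum.

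The one genuinely delicate ingredient, and the main obstacle, is the evaluation of the inner character sums $\sum_{B_i} e(\tr(\tfrac1\lambda T[{}^tD_i]\,B_i D_i^{-1}))$. For each class of $D_i$ the matrices $B_i$ range over symmetric matrices modulo an explicit lattice, and the sum factors as a product of elementary exponential sums in the entries of $B_i$; each such sum equals a full power of $p$ when the corresponding entry of $\tfrac1\lambda T[{}^tD_i]$ is (half-)integral and vanishes otherwise. These sums therefore play a double role: they enforce that only the integral indices $\tfrac1p T[\cdots]$ contribute (consistent with the convention $a(R;f)=0$ for non-half-integral $R$), and they supply exactly the extra powers of $p$ needed to promote the prefactor to the stated constants $1$, $p^{k-2}$ and $p^{2k-3}$. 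The remaining work is thus careful bookkeeping: enumerating the $B_i$ for each $D_i$-class and evaluating the resulting sums, with the case $p=2$ requiring separate attention because the divisibility condition on the off-diagonal entry, where $2T_{12}$ rather than $T_{12}$ appears, behaves differently from odd $p$. Once these sums are computed, matching terms by the elementary-divisor type of $D_i$ delivers the three displayed identities.
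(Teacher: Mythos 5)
The paper does not prove this proposition at all --- it is quoted verbatim from Hafner--Walling \cite{HafnerWalling2002} --- so there is nothing internal to compare against; your sketch is essentially the argument that the cited source carries out. The skeleton is right and the parts that can be checked quickly do check out: upper block-triangular representatives exist for all three double cosets, the relation ${}^tA_iD_i=\lambda 1_2$ and the substitution $R=\tfrac1\lambda T[{}^tD_i]$ are correct, and the prefactors work (e.g.\ for $T(p)$ the $D_i=p1_2$ family gives $p^{k-3}\cdot p^{k}\cdot p^{-2k}\cdot p^{3}=1$, the type-$(1,p)$ family gives $p^{k-3}\cdot 1\cdot p=p^{k-2}$, and $D_i=1_2$ gives $p^{k-3}\cdot p^{k}=p^{2k-3}$, matching the three displayed terms). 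What you have written is, however, a plan rather than a proof: the two places where all the actual content of \cite{HafnerWalling2002} lives are (i) the precise enumeration of the coset representatives, in particular the justification that for $p\mid N$ the double coset of $\left(\begin{smallmatrix}1_2&0\\0&p1_2\end{smallmatrix}\right)$ contains only the $D_i=p1_2$ family and that for $U_1(p^2)$ only $D_i$ of elementary-divisor type $(p,p^2)$ occur (you assert the obstruction but do not exhibit the decomposition), and (ii) the evaluation of the sums over $B_i$, which you correctly identify as enforcing integrality and supplying powers of $p$ but do not compute. As a blueprint for reproving the quoted result it is sound; to stand as a proof those two computations would need to be written out.
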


\section{Calculation of the Fourier coefficients}\label{sctn:calculation-of-fcs}

\noindent \textbf{A computation based on \cite{Walling2012}.}  Fix a partition $(N_0, N_1, N_2)$ of the squarefree integer $N$ and let $\mathbb{E}_{(N_0, N_1, N_2)}$ be the associated Eisenstein series transforming with the trivial character modulo $N$.  Recall that (for $T$ a $2$-rowed, half-integral, positive semi-definite, symmetric matrix) we wrote $a(T; f)$ for the $T$th Fourier coefficient of $f$.  We also write $a(T) = a(T; \mathbb{E}_{(N_0, N_1, N_2)})$, and we define
\[\begin{aligned} a_0(T) &= a(T; \mathbb{E}_{(pN_0, N_1, N_2)}), \\
a_1(T) &= a(T; \mathbb{E}_{(N_0, pN_1, N_2)}), \\
a_2(T) &= a(T; \mathbb{E}_{(N_0, N_1, pN_2)}). \end{aligned}\]

\begin{lemma}\label{basic-formula-for-ai} In the above notation \[ \begin{aligned} a_0(T) &= \frac{(p^{3k-2}+p^{2k-1}-p^{2k-2}+p^{k+1}-p^k-p+1)a(T) - (p^{2k-1}+p^{k+1}+p^2-p)a(pT) + p^2a(p^2T)}{(p^k-1)(p^{2k-2}-1)}, \\
a_1(T) &= \frac{(-p^{2k-1}- p^{k+1}-p^3+p)a(T) + (p^{2k-1}+p^{k+1}+p^3+p^2-p+p^{-k+4})a(pT) - (p^2 + p^{-k+4})a(p^2T)}{(p^k-1)(p^{2k-2}-1)}, \\
a_2(T) &= \frac{p^3a(T) - (p^3 + p^{-k+4})a(pT) + p^{-k+4}a(p^2T)}{(p^k-1)(p^{2k-2}-1)}. \end{aligned}\] \end{lemma}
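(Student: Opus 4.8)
The plan is to turn the three identities of Proposition~\ref{Up-action} into a single matrix recursion on Fourier coefficients, iterate it once, and combine the result with the cusp decomposition to obtain a $3\times3$ linear system for $a_0(T),a_1(T),a_2(T)$ whose right-hand side is $(a(T),a(pT),a(p^2T))$.

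First I would encode the $U(p)$-action as a matrix. Reading off the coefficients in Proposition~\ref{Up-action}, let $M$ be the $3\times3$ upper-triangular matrix expressing $U(p)$ on the ordered basis $\mathbb{E}_{(pN_0,N_1,N_2)},\mathbb{E}_{(N_0,pN_1,N_2)},\mathbb{E}_{(N_0,N_1,pN_2)}$; its diagonal is $(1,p^{k-1},p^{2k-3})$. Applying the Fourier-coefficient rule $a(T;f|U(p))=a(pT;f)$ of Proposition~\ref{action-on-fc} to each of these three identities converts $M$ into a recursion on coefficients: with $\mathbf{a}(T)=(a_0(T),a_1(T),a_2(T))^{t}$ one gets $\mathbf{a}(pT)=M\,\mathbf{a}(T)$, and hence $\mathbf{a}(p^2T)=M^2\,\mathbf{a}(T)$ upon iterating.

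Next I would feed in the cusp decomposition $\mathbb{E}_{(N_0,N_1,N_2)}=\mathbb{E}_{(pN_0,N_1,N_2)}+\mathbb{E}_{(N_0,pN_1,N_2)}+\mathbb{E}_{(N_0,N_1,pN_2)}$ (Corollary~2.4), evaluated at $T$, $pT$, and $p^2T$. Writing $\mathbf{v}=(1,1,1)$, this says $a(p^{j}T)=\mathbf{v}\,\mathbf{a}(p^{j}T)=\mathbf{v}M^{j}\mathbf{a}(T)$ for $j=0,1,2$, so that $(a(T),a(pT),a(p^2T))^{t}=W\,\mathbf{a}(T)$, where $W$ is the matrix with rows $\mathbf{v}$, $\mathbf{v}M$, $\mathbf{v}M^2$. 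Since $M$ is upper-triangular with the three distinct eigenvalues $1,p^{k-1},p^{2k-3}$ (distinct because $k\ge4$), the pair $(\mathbf{v},M)$ is observable and $W$ is invertible; solving $\mathbf{a}(T)=W^{-1}(a(T),a(pT),a(p^2T))^{t}$ by Cramer's rule produces the three stated formulae, all carrying the common denominator $\det W$.

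The one genuine obstacle is computational rather than conceptual: I must form $M^2$, write the entries of $W$ as Laurent polynomials in $p$, and then carry out the inversion and simplify each Cramer numerator and the determinant into the compact closed forms displayed --- in particular verifying that $\det W$ reduces, up to a power of $p$ that cancels against the numerators, to $(p^k-1)(p^{2k-2}-1)$. I would remark that although Propositions~\ref{U1p2-action} and~\ref{Tp-action} furnish further relations one might substitute, the iterated-$U(p)$ route is cleanest because $U(p)$ carries the $T$-coefficient to the $pT$-coefficient, keeping every argument a scalar multiple of $T$; the $U_1(p^2)$ Fourier formula instead involves the matrices $T[M]$, which are not scalar multiples of $T$ and so would not close the system.
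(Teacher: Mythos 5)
Your argument is correct and yields the stated formulae, but it follows a genuinely different route from the paper's. Both proofs begin with the two relations $a(T)=a_0(T)+a_1(T)+a_2(T)$ (from Corollary 2.4) and $a(pT)=a_0(T)+(p^{k-1}+1-p^{-1})a_1(T)+(p^{2k-3}+p^{k-1}-p^{k-3}+1-p^{-1})a_2(T)$ (from applying $U(p)$), but they close the $3\times3$ system differently: you iterate $U(p)$ to get $\mathbf{a}(p^2T)=M^2\mathbf{a}(T)$, whereas the paper applies $U_1(p^2)$ to the cusp decomposition and then invokes the $T(p)$-eigenvalue relation of Proposition \ref{Tp-action} evaluated at $pT$ to rewrite $a(T;\mathbb{E}_{(N_0,N_1,N_2)}|U_1(p^2))$ in terms of $a(T)$, $a(pT)$, $a(p^2T)$ --- precisely to get around the issue you flag, namely that the $U_1(p^2)$ coefficient formula involves the non-scalar arguments $T[M]$. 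Since the true coefficients satisfy both systems and both coefficient matrices are invertible, the two solutions must coincide; your version needs only Proposition \ref{Up-action} and the first line of Proposition \ref{action-on-fc}, which is an economy, while the paper's detour produces the intermediate expressions (\ref{a0-penultimate})--(\ref{a2-penultimate}) along the way.

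Two details in your write-up need tightening, though neither affects the conclusion. First, distinctness of the eigenvalues $1$, $p^{k-1}$, $p^{2k-3}$ is not by itself sufficient for $W$ to be invertible: you must also check that $(1,1,1)$ pairs nontrivially with each eigenvector of $M$. This does hold here --- the three pairings work out to $1$, $p^{-1}(p^k-1)/(p^{k-1}-1)$, and a nonzero multiple of $p^{k-3}(p^k-1)(p^{2k-2}-1)$ --- and in any case your planned explicit computation of $\det W$ would confirm it. Second, one finds $\det W = p^{k-5}(p^k-1)^2(p^{2k-2}-1)$, so the factor cancelling against the Cramer numerators is $p^{k-5}(p^k-1)$, not merely a power of $p$. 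As a sanity check, the Cramer numerator for $a_2$ is $(p^k-1)\bigl[p^{k-2}a(T)-(p^{k-2}+p^{-1})a(pT)+p^{-1}a(p^2T)\bigr]$, which after dividing by $\det W$ gives exactly the stated formula for $a_2(T)$.
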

\begin{proof}  By Corollary 2.4 we have
\begin{equation}\label{eisenstein-decomposition} \mathbb{E}_{(N_0, N_1, N_2)} = \mathbb{E}_{(pN_0, N_1, N_2)} + \mathbb{E}_{(N_0, pN_1, N_2)} + \mathbb{E}_{(N_0, N_1, pN_2)}, \end{equation}
and comparing the $T$th Fourier coefficient in this gives
\begin{equation}\label{basic-fc-relation1} a(T) = a_0(T) + a_1(T) + a_2(T).\end{equation}
Now apply $U(p)$ to (\ref{eisenstein-decomposition}).  By Proposition \ref{Up-action} we have
\[\begin{aligned} \mathbb{E}_{(N_0, N_1, N_2)} | U(p) &= \mathbb{E}_{(pN_0, N_1, N_2)} \\
&\qquad + (p^{k-1} + 1 - p^{-1}) \mathbb{E}_{(N_0, pN_1, N_2)} \\
&\qquad + (p^{2k-3} + p^{k-1} - p^{k-3} + 1 - p^{-1}) \mathbb{E}_{(N_0, N_1, pN_2)}. \end{aligned}\]
Note that $\mathbb{E}_{(N_0, N_1, N_2)} | U(p)$ makes sense since $\mathbb{E}_{(N_0, N_1, N_2)}$, \textit{a priori} a modular form of level $N$, is also a modular form of level $Np$.  Hence by Proposition \ref{action-on-fc} $a(T; \mathbb{E}_{(N_0, N_1, N_2)}| U(p)) = a(pT; \mathbb{E}_{(N_0, N_1, N_2)})$ and we have
\begin{equation}\label{basic-fc-relation2} \begin{aligned} a(pT) &= a_0(T) \\
&\qquad + (p^{k-1} + 1 - p^{-1})a_1(T) \\
&\qquad + (p^{2k-3} + p^{k-1} - p^{k-3} + 1 - p^{-1})a_2(T). \end{aligned}\end{equation}
Similarly, apply $U_1(p^2)$ to (\ref{eisenstein-decomposition}) we obtain
\begin{equation}\label{basic-fc-relation3} \begin{aligned} a(T; \mathbb{E}_{(N_0, N_1, N_2)} | U_1(p^2)) &= (p+1)a_0(T) \\
&\qquad + (p^{2k-2} + p^{k-1} - p^{k-2} + p + 1 - p^{-1})a_1(T) \\
&\qquad + (p^{2k-2} + p^{2k-3} + p^{k-1} - p^{k-3} + p + 1 - p^{-1} - p^{-2})a_2(T). \end{aligned}\end{equation}
Solving (\ref{basic-fc-relation1}), (\ref{basic-fc-relation2}) and (\ref{basic-fc-relation3}) simultaneously we obtain
\begin{subequations}\label{a-penultimate}
\begin{alignat}{2}
a_0(T) &= \frac{(p^{3k-2} - p^{2k-2} + p^{k+1} - p^k - p + 1)a(T) + (p^k + p)a(pT) -p^ka(T; \mathbb{E}_{(N_0, N_1, N_2)} | U_1(p^2))}{(p^k-1)(p^{2k-2}-1)}, \label{a0-penultimate} \\
a_1(T) &= \frac{(-p^3+p)a(T) -(p^{k+1}+p^k+p^2+p)a(pT) + (p^k + p^2)a(T; \mathbb{E}_{(N_0, N_1, N_2)} | U_1(p^2))}{(p^k-1)(p^{2k-2}-1)}, \label{a1-penultimate} \\
a_2(T) &= \frac{(- p^{k+1} + p^3)a(T) + (p^{k+1} + p^2)a(pT) -p^2a(T; \mathbb{E}_{(N_0, N_1, N_2)} | U_1(p^2))}{(p^k-1)(p^{2k-2}-1)}. \label{a2-penultimate}
\end{alignat}
\end{subequations}

Comparing Fourier expansions at $pT$ in Proposition \ref{Tp-action} we have
\[a(pT; \mathbb{E}_{(N_0, N_1, N_2)}|T(p)) = (p^{2k-3} + p^{k-1} + p^{k-2} + 1)a(pT)\]
On the other hand, by Proposition \ref{action-on-fc},
\[\begin{aligned} a(pT; \mathbb{E}_{(N_0, N_1, N_2)}|T(p)) &= a(p^2T) + p^{k-2} \left(\sum_{\alpha \bmod p} a\left(T\left[\left(\begin{matrix}1 & 0 \\ \alpha & p \end{matrix}\right)\right]\right) + a\left(T\left[\left(\begin{matrix}p & 0 \\ 0 & 1 \end{matrix}\right)\right]\right)\right) + p^{2k-3}a\left(T\right)\\
&= a(p^2T) + p^{k-2}a(T; \mathbb{E}_{(N_0, N_1, N_2)} | U_1(p^2)) + p^{2k-3}a(T).\end{aligned}\]
Hence
\[a(T; \mathbb{E}_{(N_0, N_1, N_2)} | U_1(p^2)) = -p^{-k+2}a(p^2T) + (p^{k-1} + p + 1 + p^{-k+2})a(pT) - p^{k-1}a(T),\]  
and substituting this in to (\ref{a0-penultimate}), (\ref{a1-penultimate}), (\ref{a2-penultimate}) we obtain the lemma.  \end{proof}

\textbf{Formul\ae\ for the Fourier coefficients.}   Note that, given the Fourier coefficients $a(T) = a(T; \mathbb{E}_{(N_0, N_1, N_2)})$, Lemma \ref{basic-formula-for-ai} provides a formula for the Fourier coefficients $a_0(T)$, $a_1(T)$, and $a_2(T)$.  As these are written they are, of course, unsatisfactory; we will now present them in a more familiar form.\\

Before proceeding let us recall the formula from \cite{EichlerZagier1985} for the Fourier coefficients of the level 1 Siegel-Eisenstein series $\mathbb{E}$ of degree $2$ at a positive definite matrix $T$.  This formula is
\begin{equation}\label{level1-fc} a(T; \mathbb{E}) = \frac{2}{\zeta(1-k)\zeta(3-2k)} \sum_{d \mid e(T)} d^{k-1}H\left(\frac{\Delta(T)}{d^2}\right)\end{equation}
where $e\left(\left(\begin{smallmatrix} a & b/2 \\ b/2 & c \end{smallmatrix}\right)\right) = \gcd(a, b, c)$, and $H$ denotes the function defined by Cohen in \cite{Cohen1975} (with first parameter in the notation of \cite{Cohen1975} set equal to $k-1$): writing a positive integer $M$ with $M \equiv 0, -1 \bmod 4$ as $M = -Df^2$ where $D < 0$ is fundamental discriminant the function is
\[H(M) = L(2-k, \chi_D) \sum_{g \mid f} \mu(g) \chi_D(g) g^{k-2} \sum_{h \mid (f/g)} h^{2k-3},\]
where $\chi_D$ is the character associated with the extension $\mathbb{Q}(\sqrt{D})$.  Now let $N$ be any (squarefree) positive integer and let $\mathbf{1}_N$ denote the trivial character modulo $N$.  For $M = -Df^2$ as above we define 
\[H_N(M) = L(2-k, \chi_D) \sum_{g \mid f} \mathbf{1}_N(g) \mu(g) \chi_D(g) g^{k-2} \sum_{h \mid (f/g)} \mathbf{1}_N(h) h^{2k-3}.\]
Note that $H_1 = H$.  Let us also remark that \cite{EichlerZagier1985} provides a formula for the Fourier coefficient $a(T; \mathbb{E})$ when $T$ is singular, namely
\begin{equation}\label{level1-fc-singular} a\left(\left(\begin{matrix} n & 0 \\ 0 & 0 \end{matrix}\right);\: \mathbb{E}\right) = \begin{cases} \frac{2}{\zeta(1-k)} \sum_{d \mid n} d^{k-1} & \text{if }n > 0, \\ 1 & \text{if }n = 0. \end{cases}\end{equation}
This is of course an illustration of how the Fourier coefficients of Eisenstein series of degree $n$ on singular matrices are given by those of Eisenstein series of degree $n-1$.

\begin{lemma}\label{lem:props-of-HN}  Let $N$ be a squarefree positive integer, $p$ a prime not dividing $N$, $k$ a positive integer, $M$ a positive integer with $M \equiv 0, -1 \bmod 4$.  Write $M = -Df^2$ as above, then
\[ H_{Np}(M)C_{p, D}(\ord_p(f)) = H_N(M)\]
where
\[C_{p, D}(v) = \sum_{j=0}^{v} p^{j(2k-3)} - \chi_D(p) p^{k-2} \sum_{j=0}^{v-1} p^{j(2k-3)}.\]
Moreover, writing $p^2M = -D(pf)^2$, we also have
\[H_{Np}(p^2M) = H_{Np}(M).\]\end{lemma}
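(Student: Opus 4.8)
The plan is to exploit the multiplicative structure of the divisor sum defining $H_N(M)$. First I would factor out the common $L$-value $L(2-k,\chi_D)$, which is identical for $H_N(M)$, $H_{Np}(M)$, and $H_{Np}(p^2M)$ since it depends only on $D$. The remaining sum $\sum_{g \mid f} \mathbf{1}_N(g)\mu(g)\chi_D(g)g^{k-2} \sum_{h \mid (f/g)} \mathbf{1}_N(h)h^{2k-3}$ is multiplicative as a function of $f$ (the Möbius function and $\chi_D$ are completely multiplicative, and the trivial character is multiplicative), so it factors as a product of local factors over the primes $q \mid f$.

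Next I would write down the local factor at a prime $q$ with $q^e \| f$. Because $\mu(q^a) = 0$ for $a \geq 2$, only the $g$-exponents $a=0$ and $a=1$ survive, giving the local factor
\[
\sum_{b=0}^{e} \mathbf{1}_N(q^b) q^{b(2k-3)} - \mathbf{1}_N(q)\chi_D(q)q^{k-2}\sum_{b=0}^{e-1} \mathbf{1}_N(q^b) q^{b(2k-3)}.
\]
The crucial observation is how this depends on the level. For a prime $q \neq p$ one has $\mathbf{1}_N(q^b) = \mathbf{1}_{Np}(q^b)$ (whether $q$ divides $N$ or $Np$ is the same condition), so the $q$-local factors for levels $N$ and $Np$ coincide. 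At $q = p$, where $p \nmid N$ and $e = \ord_p(f) =: v$, the level-$N$ factor (all $\mathbf{1}_N(p^b)=1$) is exactly $\sum_{b=0}^{v} p^{b(2k-3)} - \chi_D(p)p^{k-2}\sum_{b=0}^{v-1} p^{b(2k-3)} = C_{p,D}(v)$, whereas the level-$Np$ factor has $\mathbf{1}_{Np}(p^b)=0$ for all $b \geq 1$, so only the single term $a=0,\,b=0$ survives and the factor equals $1$.

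Combining these, $H_N(M)$ and $H_{Np}(M)$ have identical local factors at every $q \neq p$ and differ at $p$ by the ratio $C_{p,D}(v)/1$, which yields the first identity $H_{Np}(M) C_{p,D}(\ord_p(f)) = H_N(M)$. For the second identity, replacing $f$ by $pf$ (passing from $M$ to $p^2M = -D(pf)^2$) leaves $\ord_q(f)$ unchanged for every $q \neq p$, so all those local factors are unaffected; and at $p$ the level-$Np$ local factor is $1$ independently of $\ord_p$, so increasing it from $v$ to $v+1$ changes nothing. Hence $H_{Np}(p^2M) = H_{Np}(M)$. The computation is entirely routine; the only point requiring care is the bookkeeping of which terms the trivial character kills at level $Np$ versus level $N$, together with recognising that $C_{p,D}(v)$ is precisely the level-$N$ local factor at $p$.
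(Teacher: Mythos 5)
Your proof is correct and is essentially the paper's argument: the paper likewise isolates the $p$-part of the Dirichlet convolution (splitting $g$ by $\ord_p(g)\in\{0,1\}$ and $h$ by its $p$-power part) and observes that at level $Np$ the trivial character kills every term with positive $p$-valuation while at level $N$ the $p$-part sums to $C_{p,D}(\ord_p(f))$; your Euler-factor packaging is the same computation. One tiny slip of terminology: $\mu$ is multiplicative but not completely multiplicative --- though multiplicativity of each factor is all the convolution argument needs, so nothing breaks.
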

\begin{proof}  From the definition we have
\[\begin{aligned}H_N(M) &= L(2-k, \chi_D) \left[\sum_{\substack{g \mid f \\ \ord_p(g)=0}} \mathbf{1}_N(g) \mu(g) \chi_D(g) g^{k-2} \sum_{h \mid (f/g)} \mathbf{1}_N(h) h^{2k-3} \right.\\
&\left.\qquad - \chi_D(p)p^{k-2} \sum_{\substack{g \mid f \\ \ord_p(g)=0}} \mathbf{1}_N(g) \mu(g) \chi_D(g) g^{k-2} \sum_{h \mid (f/(pg))} \mathbf{1}_N(h) h^{2k-3} \right] \\
&= L(2-k, \chi_D) \sum_{g \mid f} \mathbf{1}_{Np}(g) \mu(g) \chi_D(g) g^{k-2} \sum_{\substack{h \mid (f/g) \\ \ord_p(h)=0}} \mathbf{1}_N(h)h^{2k-3} \\
&\qquad\times \left[\sum_{j=0}^{\ord_p(f)} p^{j(2k-3)} - \chi_D(p) p^{k-2} \sum_{j=0}^{\ord_p(f)-1} p^{j(2k-3)}\right] \\
&= L(2-k, \chi_D) \sum_{g \mid f} \mathbf{1}_{Np}(g) \mu(g) g^{k-2} \sum_{h \mid (f/g)} \mathbf{1}_{Np}(h)h^{2k-3}C_{p, D}(\ord_p(f)).\end{aligned}\]
The second claimed equality follows immediately from the definition of $H_{Np}$.  \end{proof}

\begin{theorem}\label{fc-of-eisenstein-series}  Let $T = \left(\begin{smallmatrix} m & r/2 \\ r/2 & n \end{smallmatrix}\right)$ be positive semidefinite.  Let $\Delta = 4mn-r^2$ and $e = \gcd(m, n, r)$.  Write $-\Delta = Df^2$ where $D$ is a fundamental discriminant.  Let $\chi_D$ denote the character $\left(\frac{D}{\cdot}\right)$, and let $\mathbf{1}_N(\cdot)$ be the trivial character modulo $N$.  Then
\begin{enumerate}  \item  at $T = \left(\begin{smallmatrix} 0 & 0 \\ 0 & 0 \end{smallmatrix}\right)$, the Fourier coefficients are as follows:
\[a\left(\left(\begin{smallmatrix} 0 & 0 \\ 0 & 0 \end{smallmatrix}\right); \mathbb{E}_{(N_0, N_1, N_2)}\right) = \begin{cases} 1 & \text{if }(N_0, N_1, N_2) = (N, 1, 1), \\ 0 & \text{otherwise.} \end{cases}\]

\item  for $T \neq \left(\begin{smallmatrix} 0 & 0 \\ 0 & 0 \end{smallmatrix}\right)$ but $\Delta=0$, the Fourier coefficients are
\[a(T; \mathbb{E}_{(N_0, N_1, N_2)}) = \Upsilon(T; N_0, N_1, N_2)\frac{2}{\zeta(1-k)}\sum_{d \mid e} \mathbf{1}_N(d) d^{k-1},\]
where $\Upsilon(T; N_0, N_1, N_2) = \prod_{i} \prod_{p \mid N_i} \upsilon_i(p, \ord_p(e))$ with
\[\begin{aligned}  \upsilon_0(p, u_p) &= \frac{p^{(u_p+1)(k-1)}-1}{p^{k-1}-1} - \frac{p^{(u_p+1)(k-1)}p}{p^{k}-1}, \\
\upsilon_1(p, u_p) &= \frac{p^{(u_p+1)(k-1)}p}{p^k-1}, \\
\upsilon_2(p, u_p) &= 0.\end{aligned}\] 

\item for $T>0$, the Fourier coefficients are
\[a(T; \mathbb{E}_{(N_0, N_1, N_2)}) = \Psi(T; N_0, N_1, N_2) \frac{2}{\zeta(1-k)\zeta(3-2k)} \sum_{d \mid e} \mathbf{1}_N(d) d^{k-1} H_N\left(\frac{\Delta}{d^2}\right),\] 
where $\Psi(T; N_0, N_1, N_2) = \prod_i \prod_{p \mid N_i} \psi_i(p, \ord_p(e), \ord_p(f))$ with
\[\begin{aligned} \psi_0(p, u_p, v_p) &= (p^{2k-3} - \chi_D(p)p^{k-2})\left[p^{v_p(2k-3)}\left(\frac{p^{k-2}(p-1)}{(p^{2k-3} - 1)(p^{2k-2}-1)(p^{k-2}-1)}\right) \right.\\
&\left.\qquad\qquad - p^{(v_p-u_p)(2k-3)}p^{u_p(k-1)} \left(\frac{p-1}{(p^{2k-3}-1)(p^k-1)(p^{k-2}-1)}\right) \right]\\
&\qquad+ (\chi_D(p)p^{k-2} - 1)\left[p^{u_p(k-1)}\left(\frac{p^{k-1}(p-1)}{(p^{2k-3}-1)(p^k-1)(p^{k-1}-1)}\right) \right.\\
&\qquad\qquad\left. - \frac{1}{(p^{2k-3}-1)(p^{k-1}-1)}  \right],\\
\psi_1(p, u_p, v_p) &= (p^{2k-3} - \chi_D(p)p^{k-2})\left[p^{v_p(2k-3)}\left(\frac{p^{k-1}(p^{2} - 1)}{(p^{2k-2}-1)(p^k-1)(p^{k-2}-1)}\right)\right.\\
&\qquad\qquad\left. - p^{(v_p-u_p)(2k-3)}p^{u_p(k-1)}\left(\frac{p(p^{k-1}-1)}{(p^{2k-3}-1)(p^k-1)(p^{k-2}-1)}\right) \right]\\
&\qquad+ (\chi_D(p)p^{k-2} - 1)p^{u_p(k-1)}\frac{p^k}{(p^{2k-3}-1)(p^k-1)},\\
\psi_2(p, u_p, v_p) &= (p^{2k-3} - \chi_D(p)p^{k-2})p^{v_p(2k-3)}\frac{p^{k+1}}{(p^{2k-2}-1)(p^k-1)}.\end{aligned}\]

\end{enumerate} \end{theorem}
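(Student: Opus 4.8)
The plan is to prove all three parts at once by induction on the number of prime divisors of the squarefree level $N$, using Lemma~\ref{basic-formula-for-ai} as the inductive engine and the Eichler--Zagier formulae (\ref{level1-fc}), (\ref{level1-fc-singular}) for the base case $N=1$. When $N=1$ the only partition is $(1,1,1)$, so the products $\Upsilon$ and $\Psi$ are empty and equal $1$, while $H_1=H$ and $\mathbf{1}_1\equiv 1$; thus each stated formula collapses to the corresponding Eichler--Zagier expression. For part (3) this is immediate; for the rank-one case (2) one first reduces a general singular $T$ to $\left(\begin{smallmatrix} e & 0 \\ 0 & 0\end{smallmatrix}\right)$ via the invariance $a(T[U])=a(T)$ for $U\in\GL_2(\mathbb{Z})$ (valid since $k$ is even and the character is trivial) and then applies (\ref{level1-fc-singular}); part (1) reads off the $n=0$ case of the same formula.

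For the inductive step, assume the formulae hold at squarefree level $N$ and let $p\nmid N$; the three partitions of $Np$ lying above $(N_0,N_1,N_2)$ are exactly those whose coefficients $a_0,a_1,a_2$ are computed in Lemma~\ref{basic-formula-for-ai}. First I would record the elementary rules $\Delta(p^jT)=p^{2j}\Delta(T)$, $e(p^jT)=p^je(T)$, and, in the definite case, $-\Delta(p^jT)=D(p^jf)^2$ with the \emph{same} fundamental discriminant $D$, so that passing $T\mapsto pT\mapsto p^2T$ increases both $u_p=\ord_p(e)$ and $v_p=\ord_p(f)$ by $0,1,2$. Because $p\nmid N$ the prefactor $\Psi_N=\Psi(\,\cdot\,;N_0,N_1,N_2)$ is unchanged under $T\mapsto p^jT$, so $a(T),a(pT),a(p^2T)$ all carry the common prime-to-$p$ factor $\Psi_N\,\kappa\,S_0$, where $\kappa=\frac{2}{\zeta(1-k)\zeta(3-2k)}$ and $S_0=\sum_{d_0\mid e_0}\mathbf{1}_N(d_0)d_0^{k-1}H_{Np}(-D(f_0/d_0)^2)$ (here $e_0,f_0$ are the prime-to-$p$ parts of $e,f$) is precisely the sum occurring in the target level-$Np$ formula. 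To isolate the $p$-dependence I would invoke the lemma relating $H_N$ and $H_{Np}$: from $H_N(M)=C_{p,D}(\ord_p f)\,H_{Np}(M)$ and the $p$-stability $H_{Np}(p^2M)=H_{Np}(M)$ one sees that $H_{Np}(\Delta/d^2)$ is independent of $\ord_p(d)$ (using $e\mid f$ for well-definedness), whence
\[ \sum_{d\mid e}\mathbf{1}_N(d)d^{k-1}H_N(\Delta/d^2)=G_p(u_p,v_p)\,S_0,\qquad G_p(u,v):=\sum_{a=0}^{u}p^{a(k-1)}C_{p,D}(v-a). \]

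Substituting the regrouped form $C_{p,D}(w)=\bigl[(p^{2k-3}-\chi_D(p)p^{k-2})p^{w(2k-3)}+(\chi_D(p)p^{k-2}-1)\bigr]/(p^{2k-3}-1)$ and summing the geometric series, $G_p(u,v)$ becomes a combination of the four monomials $p^{v(2k-3)}$, $p^{(v-u)(2k-3)}p^{u(k-1)}$, $p^{u(k-1)}$, $1$ --- exactly the monomial types appearing in the $\psi_i$ of part (3). The decisive point is that the shift $(u,v)\mapsto(u+1,v+1)$ acts diagonally on these monomials with eigenvalues $p^{2k-3},p^{k-1},p^{k-1},1$. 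Hence, writing $a(p^jT)=\Psi_N\kappa\,G_p(u_p+j,v_p+j)S_0$ and $a_i(T)=\bigl(c_0^{(i)}a(T)+c_1^{(i)}a(pT)+c_2^{(i)}a(p^2T)\bigr)/\bigl((p^k-1)(p^{2k-2}-1)\bigr)$ with the $c_j^{(i)}$ read off from Lemma~\ref{basic-formula-for-ai}, extracting each monomial reduces the whole verification to the scalar identities
\[ \frac{c_0^{(i)}+c_1^{(i)}\lambda+c_2^{(i)}\lambda^2}{(p^k-1)(p^{2k-2}-1)}=\frac{\psi_i\text{-coefficient of the }\lambda\text{-monomial}}{G_p\text{-coefficient of the }\lambda\text{-monomial}},\qquad \lambda\in\{1,\,p^{k-1},\,p^{2k-3}\}. \]
The singular case (2) is the same argument with $G_p$ replaced by the degenerate sum $\sum_{a=0}^{u}p^{a(k-1)}$ (only the $\lambda\in\{1,p^{k-1}\}$ monomials survive, which already forces $\upsilon_2=0$ since the relevant quadratic vanishes at both these values), and case (1) follows from $pT=p^2T=T$ together with the evaluations $c_0^{(i)}+c_1^{(i)}+c_2^{(i)}$ equal to $(p^k-1)(p^{2k-2}-1)$ for $i=0$ and to $0$ for $i=1,2$.

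The main obstacle is the bookkeeping in this final matching: one must expand $G_p$ honestly into its four geometric-series monomials (carrying the $\chi_D(p)$-linear structure through the factorisation of $C_{p,D}$), evaluate the three quadratics $P_i(\lambda)=c_0^{(i)}+c_1^{(i)}\lambda+c_2^{(i)}\lambda^2$ at $\lambda=1,p^{k-1},p^{2k-3}$, and confirm that after clearing $(p^k-1)(p^{2k-2}-1)$ each resulting rational function of $p$ and $\chi_D(p)$ matches the corresponding coefficient of the stated $\psi_i$. This is long but entirely mechanical; the split of each $\psi_i$ into a $(p^{2k-3}-\chi_D(p)p^{k-2})$-part and a $(\chi_D(p)p^{k-2}-1)$-part exactly mirrors the two-term factorisation of $C_{p,D}$ and serves as a guide and running consistency check.
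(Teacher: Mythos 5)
Your proposal is correct and follows essentially the same route as the paper: induction on the number of primes dividing $N$ with the Eichler--Zagier formulae as base case, Lemma \ref{basic-formula-for-ai} as the inductive engine, and the lemma relating $H_N$, $H_{Np}$ and $C_{p,D}$ to factor out the $p$-local sum $\sum_{j=0}^{u_p}p^{j(k-1)}C_{p,D}(v_p-j)$. The only divergence is in the endgame: where the paper verifies the $\psi_i$ by ad hoc telescoping (computing $\psi_2$, then $\psi_1$, then recovering $\psi_0$ from the sum relation), you diagonalize the shift $(u,v)\mapsto(u+1,v+1)$ on the four monomials and evaluate the quadratics $P_i(\lambda)$ at the eigenvalues $1$, $p^{k-1}$, $p^{2k-3}$ --- a tidier but equivalent organization of the same mechanical check.
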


\begin{proof}  Arguing by induction on the number of divisors of $N$, using \ref{basic-formula-for-ai} and the base case (\ref{level1-fc-singular}), one obtains 1. and 2.  These Fourier coefficients could also be obtained by considering the cusp of support of $\Phi(\mathbb{E}_{(N_0, N_1, N_2})$ to identify this as a degree $1$ Eisenstein series.  The more interesting case is that of 3.  Here we will again proceed by induction on the number of prime divisor of $N$, but now the calculations are more technical.  The base case is the formula (\ref{level1-fc}) from \cite{EichlerZagier1985} (we have the usual convention that any product indexed by the empty set is equal to $1$).\\

Now suppose we have a multiplicative partition $(N_0, N_1, N_2)$ of the square-free integer $N$, $p$ is a prime not dividing $N$, and the coefficients $a(T) = a(T; \mathbb{E}_{(N_0, N_1, N_2)})$ are as stated in the theorem.  To ease notation we shall write $u_p = \ord_p(e)$, $v_p = \ord_p(f)$.  Using both parts of Lemma \ref{lem:props-of-HN} we can write
\begin{equation}\label{introduce-p-raising-coefficient}\begin{aligned} a(T) &= \Psi(T; N_0, N_1, N_2) \frac{2}{\zeta(1-k)\zeta(3-2k)} \sum_{j=0}^{u_p} \sum_{d \mid e} \mathbf{1}_{Np}(d)d^{k-1} p^{j(k-1)} H_N\left(\frac{\Delta}{p^{2j}d^2}\right) \\
&= \Psi(T; N_0, N_1, N_2) \frac{2}{\zeta(1-k)\zeta(3-2k)} \sum_{j=0}^{u_p} \sum_{d \mid e} \mathbf{1}_{Np}(d)d^{k-1} p^{j(k-1)} H_{Np}\left(\frac{\Delta}{p^{2j}d^2}\right)C_{p, D}\left(v_p-j\right)\\
&= \frac{2}{\zeta(1-k)\zeta(3-2k)} \sum_{d \mid e} \mathbf{1}_{Np}(d) d^{k-1} H_{Np}\left(\frac{\Delta(T)}{d^2}\right)\left[\Psi(T; N_0, N_1, N_2) \sum_{j=0}^{u_p} p^{j(k-1)} C_{p, D}\left(v_p-j\right)\right].\end{aligned}\end{equation}
Similarly,
\[\begin{aligned} a(pT) &= \frac{2}{\zeta(1-k)\zeta(3-2k)} \sum_{d \mid e} \mathbf{1}_{Np}(d) d^{k-1} H_{Np}\left(\frac{\Delta}{d^2}\right) \\
&\qquad\times \left[\Psi(pT; N_0, N_1, N_2) \sum_{j=0}^{u_p+1} p^{j(k-1)}C_{p, D}\left(v_p + 1 - j\right)\right],\\
a(p^2T) &= \frac{2}{\zeta(1-k)\zeta(3-2k)} \sum_{d \mid e} \mathbf{1}_{Np}(d) d^{k-1} H_{Np}\left(\frac{\Delta}{d^2}\right)\\
&\qquad\times\left[\Psi(p^2T; N_0, N_1, N_2) \sum_{j=0}^{u_p+2} p^{j(k-1)}C_{p, D}\left(v_p + 2 - j\right)\right].\end{aligned}\]
If $\sum_{d \mid e} \mathbf{1}_{Np}(d)d^{k-1} H_{Np}(\Delta/d^2) = 0$ then the above formul\ae\ and Lemma \ref{basic-formula-for-ai} give the result, so we may assume not.  Then, again by Lemma \ref{basic-formula-for-ai}, we have
\[\begin{aligned} &(p^{k}-1)(p^{2k-2}-1)\Psi(T; pN_0, N_1, N_2) = \\
&\qquad\qquad (p^{3k-2} + p^{2k-1} - p^{2k-2} + p^{k+1} - p^k - p + 1)\left[\Psi(T; N_0, N_1, N_2)\sum_{j=0}^{u_p} p^{j(k-1)} C_{p, D}\left(v_p - j\right)\right] \\
&\qquad\qquad+ (-p^{2k-1} - p^{k+1} - p^2 + p)\left[\Psi(pT; N_0, N_1, N_2) \sum_{j=0}^{\ord_p(e(T))+1} p^{j(k-1)}C_{p, D}\left(v_p + 1 - j\right)\right]\\
&\qquad\qquad+ p^2\left[\Psi(p^2T; N_0, N_1, N_2)\sum_{j=0}^{u_p+2} p^{j(k-1)}C_{p, D}\left(v_p + 2 - j\right)\right]\\
&(p^k-1)(p^{2k-2}-1)\Psi(T; N_0, pN_1, N_2) = \\
&\qquad\qquad(-p^{2k-1} - p^{k+1} - p^3 + p)\left[\Psi(T; N_0, N_1, N_2)\sum_{j=0}^{u_p} p^{j(k-1)} C_{p, D}\left(v_p - j\right)\right] \\
&\qquad\qquad+ (p^{2k-1} + p^{k+1} + p^3 + p^2 - p + p^{-k+4}) \left[\Psi(pT; N_0, N_1, N_2)\sum_{j=0}^{u_p+1} p^{j(k-1)}C_{p, D}\left(v_p + 1 - j\right)\right]\\
&\qquad\qquad+ (-p^2 - p^{-k+4})\left[\Psi(p^2T; N_0, N_1, N_2)\sum_{j=0}^{u_p+2} p^{j(k-1)}C_{p, D}\left(v_p + 2 - j\right)\right],\\
&(p^k-1)(p^{2k-2}-1)\Psi(T; N_0, N_1, pN_2) = \\
&\qquad\qquad p^3\left[\Psi(T; N_0, N_1, N_2) \sum_{j=0}^{u_p} p^{j(k-1)} C_{p, D}\left(v_p - j\right)\right] \\
&\qquad\qquad+ (-p^3 - p^{-k+4})\left[\Psi(pT; N_0, N_1, N_2)\sum_{j=0}^{u_p+1} p^{j(k-1)}C_{p, D}\left(v_p + 1 - j\right)\right]\\
&\qquad\qquad+ p^{-k+4}\left[\Psi(p^2T; N_0, N_1, N_2) \sum_{j=0}^{u_p+2} p^{j(k-1)}C_{p, D}\left(v_p + 2 - j\right)\right].\end{aligned}\]
We aim to find a solution to these equations subject to the initial condition $\Psi(T; 1, 1, 1) = 1$.  From this initial condition and the right and side of the above formul\ae\ we see that $\Psi(T; N_0, N_1, N_2)$ only depends on $T$ via the underlying fundamental discriminant $D$ and the local quantities $u_q = \ord_q(e(T))$ and $v_q = \ord_q(f)$ at primes $q \mid N_0N_1N_2$.  In particular if $p$ is a prime not dividing $N_0 N_1 N_2$ then $\Psi(pT; N_0, N_1, N_2) = \Psi(T; N_0, N_1, N_2)$.  Thus we can remove $\Psi(T; N_0, N_1, N_2)$ as a common factor from all terms on the right hand side of the above system of equations, which then simplify to
\[\begin{aligned} &(p^k-1)(p^{2k-2}-1)\Psi(T; pN_0, N_1, N_2) = \\
&\qquad\left[(p^{2k-1} - p^{2k-2} - p + 1)\sum_{j=0}^{u_p}p^{j(k-1)} C_{p, D}(v_p-j) -(p^{2k-1} + p^2 - p)C_{p, D}(v_p + 1) + p^2 C_{p, D}(v_p + 2)\right] \\
&\qquad\times \Psi(T; N_0, N_1, N_2), \\
&(p^k-1)(p^{2k-2}-1)\Psi(T; N_0, pN_1, N_2) = \\
&\qquad\left[(p^{3k-2} - p^{2k-1} - p^{k} + p)\sum_{j=0}^{u_p} p^{j(k-1)} C_{p, D}(v_p-j) + (p^{2k-1} + p^2 - p + p^{-k+4})C_{p, D}(v_p + 1)\right.\\
&\qquad \qquad\left. - (p^2+p^{-k+4})C_{p, D}(v_p + 2)\right]\Psi(T; N_0, N_1, N_2), \\
&(p^k-1)(p^{2k-2}-1)\Psi(T; N_0, N_1, pN_2) = \\
&\qquad \left[-p^{-k+4}C_{p, D}(v_p + 1) + p^{-k+4}C_{p, D}(v_p + 2)\right] \\
&\qquad \times\Psi(T; N_0, N_1, N_2).\end{aligned}\]
In the third of these we note that 
\begin{equation} \label{expand-C} C_{p, D}(v_p+2) = C_{p, D}(v_p + 1) + p^{(v_p+2)(2k-3)} - \chi_D(p)p^{k-2}p^{(v_p+1)(2k-3)},\end{equation} 
thus 
\[\begin{aligned} (p^k-1)(p^{2k-2}-1)\Psi(T; N_0, N_1, pN_2) &= p^{k+1}\left[p^{(v_p+1)(2k-3)} - \chi_D(p)p^{k-2}p^{v_p(2k-3)}\right]\Psi(T; N_0, N_1, N_2)\\
&= p^{v_p(2k-3)} \left[p^{3k-2} - \chi_D(p)p^{2k-1}\right]\Psi(T; N_0, N_1, N_2).\end{aligned}\]
This gives the formula for $\psi_2$ stated in the theorem.  For $\psi_1$ we note that $p^{3k-2} - p^{2k-1} - p^k + p = (p^{k-1}-1)(p^{2k-1} - p)$, so
\[\begin{aligned}&(p^{3k-2} - p^{2k-1} - p^k + p)\sum_{j=0}^{u_p} p^{j(k-1)} C_{p, D}(v_p - j) \\
&\qquad= (p^{2k-1} - p)\left[\sum_{j=0}^{u_p}p^{(j+1)(k-1)} C_{p, D}(v_p - j) - \sum_{j=0}^{u_p}p^{j(k-1)} C_{p, D}(v-j)\right] \\
&\qquad= (p^{2k-1} - p)\left[p^{(u_p+1)(k-1)}C_{p, D}(v_p-u_p) - C_{p, D}(v_p)  \right.\\
&\qquad\qquad\left. + \sum_{j=0}^{u_p-1} p^{(j+1)(k-1)}\left[C_{p, D}(v_p-j) - C_{p, D}(v_p-j-1)\right]\right] \\
&\qquad = (p^{2k-1} - p) \left[p^{(u_p+1)(k-1)}C_{p, D}(v_p-u_p) - C_{p, D}(v_p)  \right.\\
&\qquad\qquad\left. + \sum_{j=0}^{u_p-1} p^{(j+1)(k-1)}\left[p^{(v_p-j)(2k-3)} - \chi_D(p)p^{k-2}p^{(v_p-j-1)(2k-3)}\right]\right]\end{aligned}\]
Also, expanding as with (\ref{expand-C}) we have
\[\begin{aligned}&(p^{2k-1} + p^2 - p + p^{-k+4})C_{p, D}(v_p + 1) - (p^2 + p^{-k+4})C_{p, D}(v_p+2) \\
&\qquad = (p^{2k-1} - p)C_{p, D}(v) - (p^{k+1} + p)p^{v_p(2k-3)}(p^{2k-3} - \chi_D(p)p^{k-2})\end{aligned}\]
Combining these in to the formula for $\Psi(T; N_0, pN_1, N_2)$  we obtain
\[\begin{aligned}&(p^k-1)(p^{2k-2}-1)\Psi(T; N_0, pN_1, N_2) \\
&=\left\{ (p^{2k-1}-p) \left[p^{(u_p + 1)(k-1)}C_{p, D}(v_p - u_p) + \sum_{j=0}^{u_p-1} p^{(j+1)(k-1)}\left[p^{(v_p-j)(2k-3)} - \chi_D(p)p^{k-2}p^{(v_p-j-1)(2k-3)}\right]\right] \right.\\
&\left.\qquad - (p^{k+1} + p)p^{v_p(2k-3)}\left(p^{2k-3} - \chi_D(p)p^{k-2}\right)\right\}\Psi(T; N_0, N_1, N_2)\\
&=\left\{p^{u_p(k-1)}p^{k-1}(p^{2k-1} - p) \right.\\
&\left.\qquad+ \left[(p^{2k-1} - p)\left(p^{k-1}\frac{p^{(v_p-u_p)(2k-3)}p^{u_p(k-1)} -p^{u_p(k-1)}}{p^{2k-3} - 1} + \frac{p^{v_p(2k-3)}-p^{(v_p-u_p)(2k-3)}p^{u_p(k-1)}}{p^{k-2}-1}\right) \right.\right.\\
&\left.\qquad\qquad- (p^{k+1} + p)p^{v_p(2k-3)}\left. \right](p^{2k-3} - \chi_D(p)p^{k-2})\right\}\Psi(T; N_0, N_1, N_2) \\
&=\left\{(p^{2k-3} - \chi_D(p)p^{k-2})\left[p^{v_p(2k-3)}\left(\frac{p^{k-1}(p^{2} - 1)}{p^{k-2}-1}\right) - p^{(v_p-u_p)(2k-3)}p^{u_p(k-1)}\left(\frac{p(p^{2k-2}-1)(p^{k-1}-1)}{(p^{2k-3}-1)(p^{k-2}-1)}\right) \right]\right.\\
&\left.\qquad+ (\chi_D(p)p^{k-2} - 1)p^{u_p(k-1)}\frac{p^k(p^{2k-2}-1)}{p^{2k-3}-1}\right\}\Psi(T; N_0, N_1, N_2).\end{aligned}\]
This gives the formula for $\psi_1$ stated in the theorem.  One can argue in a similar fashion to derive the formula for $\psi_0$, but given that we have found these formul\ae\ to $\psi_1$ and $\psi_2$ it is less painful to instead argue from the observation that by (\ref{introduce-p-raising-coefficient}) we have
\[\Psi(T; pN_0, N_1, N_2) + \Psi(T; N_0, pN_1, N_2) + \Psi(T; N_0, N_1, pN_2) = \Psi(T; N_0, N_1, N_2) \sum_{j=0}^{u_p} p^{j(k-1)} C_{p, D}(v_p-j)\]
so we can obtain $\psi_0$ by evaluating the sum on the right hand side.  But this is easily done, namely
\[\begin{aligned}\sum_{j=0}^{u_p} p^{j(k-1)} C_{p, D}(v_p - j) &= \sum_{j=0}^{u_p} p^{j(k-1)} \left[1 + (p^{2k-3} - \chi_D(p)p^{k-2}) \sum_{i=0}^{v_p-j-1}p^{i(2k-3)} \right] \\
&= \left(\frac{p^{(u_p+1)(k-1)} - 1}{p^{k-1} - 1}\right) + (p^{2k-3} - \chi_D(p)p^{k-2})\sum_{j=0}^{u_p} p^{j(k-1)} \left(\frac{p^{(v_p-j)(2k-3)}-1}{p^{2k-3}-1}\right)\\
&= (p^{2k-3} - \chi_D(p)p^{k-2})\left[p^{v_p(2k-3)} \left(\frac{p^{k-2}}{(p^{2k-3}-1)(p^{k-2}-1)}\right) \right.\\
&\left.\qquad\qquad - p^{(v_p-u_p)(2k-3)}p^{u_p(k-1)}\left(\frac{1}{(p^{2k-3}-1)(p^{k-2}-1)}\right)\right] \\
&\qquad + (\chi_D(p)p^{k-2} - 1)\left[p^{u_p(k-1)} \left(\frac{p^{k-1}}{(p^{2k-3}-1)(p^{k-1}-1)}\right) \right.\\
&\left.\qquad\qquad -\frac{1}{(p^{2k-3}-1)(p^{k-1}-1)}\right].\end{aligned}\]
Subtracting $\psi_1 + \psi_2$ from this we obtain the formula for $\psi_0$ stated in the theorem.

\end{proof}

\section{Applications to representation numbers of quadratic forms}\label{sctn:representation-numbers}

Let $L$ be a lattice in $\mathbb{Z}^{2k}$ endowed with a quadratic form $Q : L \to \mathbb{Z}$.  Then $Q$ defines a symmetric bilinear form on $L$ by the formula
\[B(x, y) = Q(x + y) - Q(x) - Q(y)\]
which is integer valued and moreoever satisfies $B(x, x) \in 2\mathbb{Z}$ for all $x \in L$.  Conversely given such a bilinear form $B$ we can define a quadratic form by the rule $Q(x) = \frac{1}{2}B(x, x)$.  This sets up a bijection, so that specifying a $\mathbb{Z}$-valued quadratic form $Q$ is equivalent to specifying a $\mathbb{Z}$-valued bilinear form $B$ such that $B(x, x) \in 2\mathbb{Z}$ for all $x \in L$.  We shall refer to a lattice endowed with either of these equivalent structures as an even lattice.  Picking a basis $(e_1,...,e_{2k})$ for $L$ we can map $B$ to the matrix $B(e_i, e_j)$, which we call a Gram matrix for $L$.  A Gram matrix then has integer entries and is even ones on the diagonal, we call such a matrix \textit{even integral}, \\

For such an even lattice $L$ we form the degree $n$ theta series by
\[\begin{aligned}\theta_L^{(n)}(Z) &= \sum_{X \in \mathbb{Z}^{2k, n}} e^{\pi i\tr({}^tX S X Z)} \\
&= \sum_{\substack{T \geq 0 \\ T\text{ even integral}}} r_S(T) e^{\pi i\tr(TZ)},\end{aligned}\]
where $S$ is any Gram matrix for $L$, and $r_S(T) = \#\{X \in \mathbb{Z}^{2k, n};\; {}^tXSX = T\}$ is the number of representations of the $n$-variable quadratic form $T$ by $S$.  This in the form of the Fourier expansion of a Siegel modular form (c.f. (\ref{fourier-expansion})).  It is well-known that $\theta_L^{(n)}$ is indeed a Siegel modular form, namely $\theta_L^{(n)} \in \mathcal{M}_k^{(n)}(N, \chi)$ where the level $N$ is the level of the the lattice $L$ (equivalently the smallest integer $N$ such that $NS^{-1}$ is an even integral matrix) and the character is
\[\chi = \left(\frac{(-1)^k \det(S)}{\cdot}\right).\]
Note that if $\det(S)$ is a (global) square then this character is trivial.  Let us remark that both the level and character are genus-invariants of the quadratic form.  \\  

With $L$ as above we write $\theta_{\gen(L)}^{(n)}(Z)$ for the genus theta series of $L$.  This is formed as follows: let $L = L_1, L_2, ..., L_h$ be the inequivalent lattices in the genus of $L$, write $O(L_i)$ for the size of the isometry group of $L_i$, $w = \sum_{i=1}^h \frac{1}{O(L_i)}$, and put
\[\theta_{\gen(L)}^{(n)}(Z) = \frac{1}{w} \sum_{i=1}^h \frac{1}{O(L_i)} \theta_{L_i}.\]
Let $S=S_1, S_2, ..., S_h$ be Gram matrices for $L=L_1, L_2, ..., L_h$ respectively.  Then
\[\theta_{\gen(L)}^{(n)}(Z) = \sum_{\substack{T \geq 0 \\ T\text{ even integral}}} r_{\gen(S)}(T) e^{\pi i\tr(TZ)},\]
where 
\[r_{\gen(S)}(T) = \frac{1}{w} \sum_{i=1}^h \frac{1}{O(S_i)} r_{S_i}(T)\]
measures the average number of representations of $T$ by the genus of $S$.  In this section we consider the problem of computing the average representation numbers $r_{\gen(S)}(T)$.  The key to doing this is Siegel's Hauptsatz, which says that $\theta_{\gen(L)}^{(n)}$ lies in the subspace of $\mathcal{M}_k^{(n)}(N, \chi)$ spanned by Eisenstein series.  Siegel went on to expresses the Fourier coefficients $r_{\gen(L)}(T)$ as a product of $p$-adic densities of solutions to the representation problem.  Here we will give a very explicit formula for these representations numbers (in our special case) in terms of the Fourier cofficients of Siegel-Eisenstein series, under the assumption that the level is squarefree and the character is trivial.\\

Hence suppose that $N$ is squarefree.  It is well-known that the $(n-1)$-cusps of the Satake compactification $S(\Gamma_0^{(n)}(N) \backslash \mathbb{H}_n)$ of the complex analytic space $\Gamma_0^{(n)}(N) \backslash \mathbb{H}_n$ are in bijective correspondence with positive divisors of $N$ (see \cite{Dickson2013} for a description of the full cuspidal configuration of the Satake compactification in this case).  Specifically, one may use the following system of representatives: for each $p \mid N$, fix a matrix $\gamma_p$ satisfying the conditions
\[\gamma_p \equiv \begin{cases} \left(\begin{smallmatrix} 0_n & -1_n \\ 1_n & 0_n\end{smallmatrix}\right) & \mod p, \\ \left(\begin{smallmatrix} 1_n & 0_n \\ 0_n & 1_n\end{smallmatrix}\right) & \mod q\text{ for all }q\mid N, q \neq p.\end{cases}\]
For $d \mid N$, set $\gamma_d = \prod_{p \mid d} \gamma_p$ (and $\gamma_1 = 1_{2n}$).  Write $\Phi$ for the Siegel lowering operator
\[\Phi(f)(Z) = \lim_{\lambda \rightarrow \infty} f\left(\begin{matrix} Z' & 0 \\ 0 & i\lambda\end{matrix}\right),\]
where $Z \in \mathbb{H}_n$ and $Z' \in \mathbb{H}_{n-1}$.  The Siegel lowering operator maps Siegel modular forms of degree $n$ to Siegel modular forms of degree $n-1$, and geometrically corresponds to restricting $f$ to a particular $(n-1)$-cusps.  With $\gamma_d$ as above, the function $f \mapsto \Phi(f | \gamma_d)$ corresponds to restricting $f$ to the cusp represented by $\gamma_d$.

\begin{proposition}\label{value-of-theta-at-cusp}  Let $L$ be an even lattice and $S$ a Gram matrix of $L$.  Suppose that the level $N$ is squarefree and $\det(S)$ is a square, so that $\theta_L \in \mathcal{M}_k(N)$.  For a prime divisor $p$ of $N$, let $s_p(L)$ be the Hasse invariant of $Q$ on $L \otimes_\mathbb{Z} \mathbb{Z}_p$, normalized as in \cite{Scharlau1985}.  For any divisor $d$ of $N$, let $d_p$ be the highest power of $p$ dividing $\det(S)$.  Then  
\[\Phi (\theta_L^{(n)} |_k \gamma_d) = \prod_{p \mid d} \left(d_p^{-n/2} s_p(L)^n\right) \theta_{L^{\#, d}}^{(n-1)}\]
where
\[L^{\#, d} = L^\# \cap \mathbb{Z}\left[\frac{1}{p};\: p \mid d\right]\]
denotes the lattice dualized at all primes $p \mid d$.
\end{proposition}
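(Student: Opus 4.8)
The plan is to reduce the statement to the case of a single prime $d=p$, to compute the action of $\gamma_p$ on $\theta_L^{(n)}$ by a theta-transformation (Poisson summation) formula, and to finish by a direct analysis of the limit defining $\Phi$. Since the slash operator is a right action, writing $\gamma_d = \prod_{p \mid d}\gamma_p$ gives $\theta_L^{(n)}|_k\gamma_d = (\cdots(\theta_L^{(n)}|_k\gamma_{p_1})\cdots)|_k\gamma_{p_r}$. The congruence conditions defining $\gamma_p$ make it an Atkin--Lehner (local Fricke) element that is trivial at every prime $q \mid N$ with $q \neq p$, so its action modifies the lattice only at $p$, dualising $L$ there and fixing it elsewhere. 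Thus it suffices to prove, for each $p \mid N$, that $\theta_L^{(n)}|_k\gamma_p = d_p^{-n/2}\,s_p(L)^n\,\theta_{L^{\#,p}}^{(n)}$, where $L^{\#,p}$ denotes $L$ dualised at $p$. The general case then follows by induction: applying $\gamma_{p_{i+1}}$ to the partial dual already produced does not change the $p_{i+1}$-local data (discriminant valuation and local quadratic form), so the extracted constant is the same as for $L$ itself, the constants multiply, and the partial duals compose to $L^{\#,d}$. I expect this bookkeeping to be routine once the single-prime statement is secured.

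The heart of the argument is the single-prime transformation, which I would derive from Poisson summation applied to the defining sum $\theta_S^{(n)}(Z)=\sum_{X\in\mathbb{Z}^{2k\times n}}e^{\pi i\,\tr({}^tXSX\,Z)}$. The global form of this identity is
\[\theta_S^{(n)}(-Z^{-1}) = (\det S)^{-n/2}\,\det(-iZ)^{k}\,\theta_{S^{-1}}^{(n)}(Z),\]
in which $\theta_{S^{-1}}^{(n)} = \theta_{L^{\#}}^{(n)}$ is exactly the theta series of the full dual lattice and $\det(-iZ)^k$ is the expected weight-$k$ automorphy factor. Since $\gamma_p$ realises the Fricke inversion only at the prime $p$, I would carry out the corresponding summation over the completion at $p$ via the local Weil representation, in which $\gamma_p$ acts as the $p$-adic Fourier transform: this replaces $L\otimes\mathbb{Z}_p$ by its dual and leaves the other primes untouched, isolating the $p$-part of the identity above. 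The $p$-adic valuation part of $(\det S)^{-n/2}$ contributes precisely $d_p^{-n/2}$, and because the lattice $\mathbb{Z}^{2k\times n}=(\mathbb{Z}^{2k})^{n}$ factors into $n$ identical copies, the accompanying Gauss sum appears as an $n$-th power.

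The main obstacle will be pinning down this unit constant and recognising it as $s_p(L)^n$. The Gauss sum produced by the local Poisson summation is the Weil index of the quadratic form $Q$ on $L\otimes\mathbb{Z}_p$, an eighth root of unity; by Milgram's and Weil's evaluation of such Gauss sums it is expressed through the discriminant and the Hasse invariant, and for the even-rank form here it collapses to the normalised Hasse invariant $s_p(L)$. The delicate point is entirely one of conventions: one must match the Weil index to Scharlau's normalisation of the Hasse invariant as in \cite{Scharlau1985} and track the root-of-unity factors coming from $\det(-iZ)^k$. These are controlled in our setting because the rank $2k$ is even and the weight $k$ is an even integer, which forces the surviving phase to be $\pm 1$ and lets it be absorbed into $s_p(L)^n$; verifying this absorption cleanly is where the genuine work lies.

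Finally I would apply $\Phi$ to $\theta_{L^{\#,d}}^{(n)}$. Substituting $Z = \left(\begin{smallmatrix} Z' & 0 \\ 0 & i\lambda\end{smallmatrix}\right)$ with $Z'\in\mathbb{H}_{n-1}$, the summand indexed by $X=[x_1\mid\cdots\mid x_n]$ acquires the factor $e^{-\pi\lambda\,{}^tx_n S^{\#,d}x_n}$ from the last diagonal block, where $S^{\#,d}$ is a Gram matrix of $L^{\#,d}$. As $S^{\#,d}$ is positive definite, this factor tends to $0$ as $\lambda\to\infty$ unless $x_n=0$, so only the terms with vanishing last column survive, and these reassemble precisely into $\theta_{L^{\#,d}}^{(n-1)}(Z')$. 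Since $\Phi$ is linear it fixes the scalar $\prod_{p\mid d}d_p^{-n/2}s_p(L)^n$, which yields the stated formula.
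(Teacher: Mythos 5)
Your proposal has the same skeleton as the paper's proof: reduce to one prime at a time, establish the single-prime identity $\theta_L^{(n)}|_k\gamma_p = d_p^{-n/2}s_p(L)^n\,\theta_{L^{\#,p}}^{(n)}$, iterate over $p \mid d$ using the fact that the constants and the dualisation are purely local (so the order of the primes is irrelevant and the constants multiply), and finish by applying $\Phi$, which kills all summands whose last column is nonzero by positive definiteness. The difference is in how the single-prime identity is obtained: the paper simply cites B\"ocherer--Schulze-Pillot, Lemma 8.2(a) with $l=n$, noting only that the Weil-index factor $\gamma_p(d_p)$ equals $1$ because $d_p$ is a square (which follows from $\det(S)$ being a global square), whereas you propose to reprove that lemma from scratch via partial Poisson summation in the local Weil representation, evaluating the resulting Gauss sum by Milgram--Weil and matching it to Scharlau's normalisation of the Hasse invariant. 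Your route is more self-contained and correctly identifies where the real content sits, but the step you yourself flag as ``where the genuine work lies'' --- pinning down the eighth root of unity, showing it collapses to $s_p(L)^n$ under the stated parity and square-discriminant hypotheses, and reconciling conventions --- is exactly the content of the cited lemma and is left as a sketch rather than carried out; the paper's observation that $d_p$ is a square is the precise mechanism that kills the extra phase, and you would need to make that explicit to complete your version. Everything else (the induction over primes, the composition of partial duals into $L^{\#,d}$, and the computation of the limit defining $\Phi$) matches the paper and is correct.
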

\begin{proof}  Let $p$ be any prime divisor of $d$.  Applying \cite{BoechererSchulze-Pillot} Lemma 8.2(a) with $l=n$ (noting that the factor $\gamma_p(d_p)=1$ since $d_p$ is a square) we obtain 
\[\theta_L^{(n)} |_k \gamma_p = d_p^{-n/2} s_p(L)^n \theta_{L^{\#, p}}^{(n)}\]
where
\[L^{\#, p} = L^\# \cap \mathbb{Z}\left[\frac{1}{p}\right]L\]
denotes the lattice dualized only at $p$.  Since $d_p$ and $s_p$ are local to $p$ and $N$ is squarefree we can apply this result now at other primes dividing $d$ to see that
\[\theta_L^{(n)} | \gamma_d = \prod_{p \mid d} \left(d_p^{-n/2} s_p(L)^n\right) \theta_{L^{\#, d}}^{(n)}\]
where $L^{\#, d}$ is as in the statement of the proposition.  Applying the Siegel lowering operator we obtain the result.  \end{proof}

\begin{corollary}\label{crly:average-representation-number-formula}   Let $L$ be an even integral lattice of rank $2k$, $k \geq 4$.  Assume that the level $N$ is squarefree and the transformation character of $\theta_L$ trivial, so that $\theta_L \in \mathcal{M}_k(N)$.  Let $\mathbb{E}_{(N_0, N_1, N_2)}$ be the Eisenstein series in the natural basis.  Then
\[\theta_{\gen(L)} = \sum c(N_0, N_1, N_2) \mathbb{E}_{(N_0, N_1, N_2)} \]
where the sum is over all tuples $(N_0, N_1, N_2)$ of positive integers such that $N_0 N_1 N_2 = N$, and the coefficients are given by
\[c(N_0, N_1, N_2) = \prod_{p \mid N_1} d_p^{-1/2} s_p(L) \prod_{p \mid N_2} d_p^{-1}.\]
In particular if $T \in \mathbb{Q}^{2 \times 2}_{\text{sym}}$ is positive definite and semi-integral then the average representation number $r_{\gen(S)}(T)$ is given by
\[r_{\gen(S)}(T) = \sum c(N_0, N_1, N_2)a(T; \mathbb{E}_{(N_0, N_1, N_2)})\]
where the sum and $c(N_0, N_1, N_2)$ are as above, and $a(T; \mathbb{E}_{(N_0, N_1, N_2)})$ is given by Theorem \ref{fc-of-eisenstein-series}.  \end{corollary}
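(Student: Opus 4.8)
The plan is to pin down the coefficients $c(N_0, N_1, N_2)$ by evaluating $\theta_{\gen(L)}$ at the $0$-cusps, exploiting the defining property of the natural basis. First, Siegel's Haupsatz guarantees that $\theta_{\gen(L)}$ lies in the span of the $\mathbb{E}_{(N_0, N_1, N_2)}$, so we may write $\theta_{\gen(L)} = \sum c(N_0, N_1, N_2)\mathbb{E}_{(N_0, N_1, N_2)}$ with unknown scalars. Since $\mathbb{E}_{(N_0, N_1, N_2)}$ is by construction the unique Eisenstein series taking value $1$ at the $0$-cusp indexed by $(N_0, N_1, N_2)$ and $0$ at every other $0$-cusp, evaluating both sides of this identity at a fixed cusp isolates a single coefficient: $c(N_0, N_1, N_2)$ is exactly the value of $\theta_{\gen(L)}$ at the $0$-cusp $(N_0, N_1, N_2)$.

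Next I would argue that this value can be read off from $\theta_L$ itself. The value at a $0$-cusp is obtained by a degeneration $\mathbb{H}_2 \rightsquigarrow \mathbb{H}_1 \rightsquigarrow \{pt\}$ that can be organized as two applications of the Siegel operator preceded by an appropriate Fricke-type slash, and at each stage Proposition \ref{value-of-theta-at-cusp} expresses the restriction purely through the local invariants $d_p$ and $s_p(L)$, which are genus invariants. Consequently every lattice in the genus contributes the same value at each $0$-cusp, and $\theta_{\gen(L)}$, being an average, shares that value with $\theta_L$.

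The heart of the computation is then matching the rank data defining the cusp $(N_0, N_1, N_2)$ to the dualizations performed in the two stages. At a prime $p$ the rank of $M_\rho \bmod p$ records how many of the two coordinate directions degenerate: rank $2$ corresponds to a full Fricke involution performed in the first ($n=2$) stage, contributing the factor $d_p^{-n/2}s_p(L)^n = d_p^{-1}s_p(L)^2 = d_p^{-1}$ from Proposition \ref{value-of-theta-at-cusp} (using $s_p(L)^2 = 1$); rank $1$ corresponds to a Fricke performed only in the second stage, where applying Proposition \ref{value-of-theta-at-cusp} again with $n=1$ to the intermediate lattice $L^{\#, N_2}$ contributes $d_p^{-1/2}s_p(L)$; and rank $0$ corresponds to no dualization at $p$, contributing $1$. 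Since $N$ is squarefree these local contributions are independent across $p$, and the degree-$0$ theta series is identically $1$, so the value at the $0$-cusp is $\prod_{p \mid N_1} d_p^{-1/2}s_p(L)\prod_{p\mid N_2}d_p^{-1}$, which is precisely the asserted $c(N_0, N_1, N_2)$.

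Finally, the representation-number formula is immediate: $r_{\gen(S)}(T)$ is the $T$-th Fourier coefficient of $\theta_{\gen(L)}$ once the two normalizations of the Fourier expansion are matched, so comparing $T$-th coefficients in $\theta_{\gen(L)} = \sum c(N_0,N_1,N_2)\mathbb{E}_{(N_0,N_1,N_2)}$ and substituting the explicit values from Theorem \ref{fc-of-eisenstein-series} yields the stated formula. I expect the main obstacle to be the bookkeeping in the third paragraph: rigorously identifying the rank of $M_\rho \bmod p$ with the number of coordinate directions dualized, and confirming that the intermediate lattice $L^{\#, N_2}$ retains the local invariants of $L$ at the primes treated in the second stage, which holds because those primes divide $N_1$ and hence not $N_2$.
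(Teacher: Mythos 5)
Your proposal follows essentially the same route as the paper: invoke Siegel's Haupsatz, characterise the natural basis by its values at the $0$-cusps via $\Phi(\Phi(\,\cdot\,|\gamma_{N_2})|\gamma_{N_1})$, and read off each coefficient $c(N_0,N_1,N_2)$ by applying Proposition \ref{value-of-theta-at-cusp} in two stages (with $n=2$ at the primes of $N_2$ and $n=1$ at the primes of $N_1$), using squarefreeness to keep the local contributions independent. Your explicit remark that the cusp values are genus invariants (so $\theta_{\gen(L)}$ inherits them from $\theta_L$) is a small clarification the paper leaves implicit, but the argument is the same.
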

\begin{proof}  By Siegel's Hauptsatz we know that $\theta_{\gen(L)}$ is a linear combination of Eisenstein series.  Now the Eisenstein series comprising our basis are characterised by $\mathbb{E}_{(N_0, N_1, N_2)}$ being the unique weight $k$ and level $N = N_0 N_1 N_2$ Eisenstein series which takes the value $1$ at the cusp corresponding to $(N_0, N_1, N_2)$ and the value $0$ at all others.  One easily checks that this condition is 
\[\Phi \left(\Phi \left(\mathbb{E}_{(N_0, N_1, N_2)} | \gamma_{N_2}\right) | \gamma_{N_1}\right)= 1.\]
Thus to express $\theta_L^{(n)}$ as a linear combination of Eisenstein series it suffices to compute the value of $\theta_L^{(n)}$ at the $0$-cusp $(N_0, N_1, N_2)$.  By Proposition \ref{value-of-theta-at-cusp} we have
\[\Phi\left(\Phi\left(\theta_L^{(n)}| \gamma_{N_2}\right)| \gamma_{N_1}\right) = \prod_{p \mid N_1} d_p^{-1/2} s_p(L) \prod_{p \mid N_2} d_p^{-1},\]
using the fact that $N_1$ and $N_2$ are coprime.\end{proof}

We emphasise that everything in Corollary \ref{crly:average-representation-number-formula} is completely explicit.  To illustrate this we consider the case when the genus of the quadratic form corresponding to $S$ contains only one isomorphism class.  Then the average and exact representation numbers $r_{\gen(S)}(T)$ and $r_S(T)$ are the same object and Corollary \ref{crly:average-representation-number-formula} gives us an exact formula for these.  Now if $S$ is of size $2k$ then it describes a modular form of weight $k$; in order to analyse this with Eisenstien series we require $k$ to be even and at least $4$.  According to the Nebe--Sloane database there are 36 $8$-dimensional lattices which form a single-class genus (and none in higher dimensions divisible by $4$); of these there are 5 which satisfy the condition that the level be squarefree and the transformation character trivial.  As noted in the introduction one of these (which has matrix $S_1$ in the following) is $E_8$, the others are not unimodular but have small prime level.  Explicitly these lattices are the following: we regard a symmetric matrix $S = (s_{ij})$ of size $8$ as being determined by a tuple
\[v(S) = (s_{11}, s_{21}, s_{22}, s_{31}, s_{32}, s_{33}, ..., s_{81}, s_{82}, s_{83}, s_{84}, s_{85}, s_{86}, s_{87}, s_{88}).\] 
Then the Gram matrices $S_i$ for the $8$-dimensional single-genus even lattices of squarefree level and trivial character are determined by
\[\begin{aligned}v(S_1) &= (2, 1, 2, 1, 1, 2, 1, 0, 0, 2, 1, 1, 0, 0, 2, 1, 1, 0, 0, 1, 2, 1, 0, 1, 0, 0, 0, 2, 1, 1, 0, 1, 1, 1, 0, 2),\\
v(S_2) &= (2, -1, 2, 0, -1, 2, 0, 0, -1, 2, 0, 0, 0, -1, 2, 0, 0, -1, 0, 0, 2, 0, 0, 0, 0, 0, 0, 2, 0, 0, 0, 0, 0, 0, 1, 2),\\
v(S_3) &= (2, 1, 2, -1, -1, 2, 1, 1, 0, 2, 1, 1, 0, 1, 2, 1, 1, 0, 1, 1, 2, 1, 1, 0, 1, 1, 1, 2, 1, 1, 0, 1, 1, 1, 1, 2),\\
v(S_4) &= (2, 0, 2, 0, 0, 2, 1, 1, 1, 2, 0, 0, 0, 0, 2, 0, 0, 0, 0, 0, 2, 0, 0, 0, 0, 0, 0, 2, 0, 0, 0, 0, 1, 1, 1, 2),\\
v(S_5) &= (2, 0, 2, 0, 0, 2, 1, -1, 1, 4, 0, 0, 0, 1, 2, 0, 0, 0, -1, 0, 2, 0, 0, 0, -1, 0, 0, 2, 0, 0, 0, 1, 0, 0, 0, 2).\end{aligned}\]
Computing the level of each lattices and applying Proposition \ref{value-of-theta-at-cusp} we obtain the following:
\begin{table}
\begin{center}
\begin{tabular}{|l| l| l|}\hline
Matrix & Level & Number of representations of $T$ \\ \hline
$S_1$ & $1$ & $a(T; \mathbb{E}_{4, (1, 1, 1)})$ \\
$S_2$ & $3$ & $a(T; \mathbb{E}_{4, (3, 1, 1)}) + \left(1/3\right)a(T; \mathbb{E}_{4, (1, 3, 1)}) + \left(1/9\right) a(T; \mathbb{E}_{4, (1, 1, 3)})$ \\
$S_3$ & $2$ & $a(T; \mathbb{E}_{4, (2, 1, 1)}) + \left(1/2\right)a(T; \mathbb{E}_{4, (1, 2, 1)}) + \left(1/4\right) a(T; \mathbb{E}_{4, (1, 1, 2)})$ \\
$S_4$ & $2$ & $a(T; \mathbb{E}_{4, (2, 1, 1)}) + \left(1/4\right)a(T; \mathbb{E}_{4, (1, 2, 1)}) + \left(1/16\right)a(T; \mathbb{E}_{4, (1, 1, 2)})$ \\
$S_5$ & $2$ & $a(T; \mathbb{E}_{4, (2, 1, 1)}) + \left(1/8\right)a(T; \mathbb{E}_{4, (1, 2, 1)}) + \left(1/64\right)a(T; \mathbb{E}_{4, (1, 1, 2)})$ \\ \hline \end{tabular}\caption{Degree two representation numbers of the quadratic forms in eight variables which are the unique form in their genus.}\end{center}\end{table}
With the easily computable formul\ae\ of Theorem \ref{fc-of-eisenstein-series} one can now compute representation numbers of these quadratic forms very quickly on a computer.  Of course the same reasoning applies to allow quick computation of representation numbers of genus-averages of quadratic forms, provided that the level is squarefree and the transformation character is trivial.\\

\textsc{Department of Mathematics, University Walk, Bristol, BS8 1TW.\\
Email address: }\texttt{martin.dickson@bristol.ac.uk}.

\end{document}